\newtheorem{thm}[subsubsection]{Theorem}
\newtheorem{pro}[subsubsection]{Proposition}
\newtheorem{lem}[subsubsection]{Lemma}
\newtheorem{cor}[subsubsection]{Corollary}
\theoremstyle{definition} 
	\newtheorem{defn}[subsubsection]{Definition}
\theoremstyle{remark} 
	\newtheorem{rem}[subsubsection]{Remark}
	\newtheorem{exa}[subsubsection]{Example}
		\newtheorem{assume}{Assumption}
\newcommand{\mrage}{\mathrm{age}}
\newcommand{\diag}{\mathrm{diag}}
\newcommand{\mf}{\mathfrak}
\newcommand{\CC}{\mathbb C}
\newcommand{\NN}{\mathbb N}
\newcommand{\PP}{\mathbb P}
\newcommand{\GG}{\mathbb G}
\newcommand{\QQ}{\mathbb Q}
\newcommand{\ZZ}{\mathbb Z}
\newcommand{\Ocal}{\mathcal O}
\newcommand{\Hcal}{\mathcal H}
\newcommand{\id}{\operatorname{id}}
\newcommand{\Aut}{\operatorname{Aut}}
\newcommand{\GL}{\operatorname{GL}}
\newcommand{\Jac}{\operatorname{Jac}}
\newcommand{\SL}{\operatorname{SL}}
\newcommand{\Isom}{\operatorname{Isom}}
\newcommand{\age}{\mathfrak a}
\newcommand{\Pic}{\operatorname{Pic}}
\newcommand{\Hom}{\operatorname{Hom}}
\newcommand{\Spec}{\operatorname{Spec}}
\newcommand{\al}{\alpha}
\newcommand{\into}{\hookrightarrow}
\newcommand{\ga}{\gamma}
\newcommand{\fie}{\varphi}
\newcommand{\la}{\lambda}
\newcommand{\cxi}{\mathtt{i}}
\newcommand{\FJR}{\operatorname{FJRW}}
\def\pmmu{{\pmb \mu}}
\def\CR{\operatorname{CR}}
\def\ol{\overline}
\def\ul{\underline}
\def\wt{\widetilde}
\begin{document}
\title{Semi-Calabi--Yau orbifolds and mirror pairs}

\author{Alessandro Chiodo}
\address[A.~Chiodo]{{Institut de Mathématiques de Jussieu-Paris Rive Gauche\\
Sorbonne Université - Campus Pierre et Marie Curie \\
4, place Jussieu - Boite Courrier 247\\
75252 Paris Cedex 05, France}}
\email{alessandro.chiodo@imj-prg.fr}

\author{Elana Kalashnikov}
\address[E.~Kalashnikov]{{Department of Mathematics \\ 
Harvard University \\ 
Cambridge, MA 02138}}
\email{kalashnikov@math.harvard.edu}

\author{Davide Cesare Veniani}
\address[D.~C.~Veniani]{{Institut für Algebraische Geometrie \\
Leibniz Universität Hannover\\
 Welfengarten~1, 30167 Hannover, Germany}}
\curraddr{Institut für Topologie und Geometrie \\
Universität Stuttgart\\
Pfaffenwaldring 57\\
70569 Stuttgart\\
Germany}
\email{davide.veniani@igt.uni-stuttgart.de}

%
\begin{abstract}
{We generalize the cohomological mirror duality of Borcea and Voisin in any dimension and for any number of factors. Our proof applies to all examples which can be constructed through Berglund--Hübsch duality. 
Our method is a variant of the so-called Landau--Ginzburg/Calabi--Yau correspondence of Calabi--Yau orbifolds with an involution that does not preserve the volume form. 
We deduce a version of mirror duality for the fixed loci of the involution, which are beyond the Calabi--Yau category and feature hypersurfaces of general type.}
\end{abstract}

\maketitle

\pagestyle{plain}

\section{Introduction}

\subsection{Borcea--Voisin mirror pairs.} 
Nikulin's classification \cite{Ni1} of K3 surfaces $S$
with an anti-symplectic involution $\sigma$
led to a new mirror symmetry statement due to
 Dolgachev \cite{Do-mirror}, Voisin \cite{Vo}
and Borcea \cite{Borcea}.
For any 
$(S,\sigma)$ ,
a mirror partner $(S^\vee, \sigma^\vee)$ is constructed
such that 
crepant resolutions $\wt\Sigma$ and $\wt \Sigma^\vee$
of \[(S\times E)/(\sigma,i)\qquad \text{and}\qquad (S^\vee\times E)/(\sigma^\vee,i)\]
satisfy 
\begin{equation}\label{eq:BVintro} 
H^{p,q}(\wt\Sigma;\CC)\cong H^{3-p,q}(\wt \Sigma^\vee;\CC),\end{equation}
where $E$ is a fixed elliptic curve and $i$ its hyperelliptic involution.
%
This paper generalizes the above duality 
in all dimensions; indeed the above construction holds 
for any even number of factors, and Calabi--Yau orbifolds of any dimension
 at each factor (see Theorem B here below 
 and Theorem \ref{thm:mirror}).
\begin{rem}\label{rem:sloppyCY}
Notice that here, and in the rest of the paper, we refer to 
orbifolds and varieties as ``Calabi--Yau'' with an abuse of terminology: we only insist on the 
property that the canonical bundle is trivial; this is what actually matters for 
the mirror
symmetry constructions considered here.
In particular, we do not have any requirements on the vanishing of $h^{i,0}$, nor do we insist that the fundamental group is trivial, see \S\ref{subsect:BValldim_intro} and Theorem B.
\end{rem} 
 
Our generalization to all dimensions follows almost immediately from a 
refined mirror symmetry statement, 
just as Borcea--Voisin statement \eqref{eq:BVintro} is a consequence 
of the following two facts.
First, the fixed loci 
$S_\sigma$ and $S^\vee_{\sigma^\vee}$ have a cohomological 
mirror behaviour; namely
\begin{equation}\label{eq:fixedlocusMSintro}
H^{p,q}(S_\sigma;\CC)\cong H^{2-p,q}(S^\vee_{\sigma^\vee};\CC).\end{equation}
Second, the anti-invariant and invariant cohomology groups, denoted $H(\ \ ;\CC)^-$ and $H(\ \ ;\CC)^+$ respectively,
satisfy 
\begin{equation}\label{eq:anti-inv_intro} 
H^{p,q}(S;\CC)^{\pm}\cong H^{3-p,q}(S^\vee;\CC)^{\mp}.\end{equation}
For elliptic curves the same properties are trivially satisfied; this explains the appearance of 
the same curve on each side of the mirror. 
We consider a more general setup.

\subsection{Semi-Calabi--Yau models.}
Here, by semi-Calabi--Yau orbifold we mean a
proper and smooth Deligne--Mumford  stack  $\mf Z$ whose 
canonical bundle $\omega_{\mf Z}$ is equipped with a trivialization 
$\Ocal_{\mf Z}\cong \omega_{\mf Z}^{\otimes 2}$.
This yields a $2$-fold étale cover $\pi\colon \mf X\to \mf Z$ 
trivialising $\pi^*\omega_{\mf Z}$. The stack $\mf X$ is equipped with the 
deck involution $\sigma\colon \mf X\to \mf X$, and we recover $\mf Z$ as the stack-theoretic quotient
$$\mf X\to [\mf X/\sigma]=\mf Z.$$


Specifically, our semi-Calabi--Yau\footnote{It is worth mentioning that the derived category $D(\mf Z)$ 
of coherent sheaves on $\mf Z$ with its
Serre functor $\mathsf S_{\mf Z}$ 
is a fractional (semi-)Calabi--Yau category in the sense of Kuznetsov \cite[Def.~1.2]{FCYKuznetsov}: we
have $(\mathsf S_{\mf Z})^2=[2\dim(\mf Z)]$.
} setup arises as follows. 
Let $f$ be a quasi-homogeneous polynomial in 
the variables $x_1,\dots, x_N$ of weight $w_1, \dots, w_N$ and of degree 
$d=2\sum_j w_j$
\begin{equation}\label{eq:degf}
f(\la^{w_1} x_1,\dots, \la^{w_N}x_N)=\la^{2\sum_jw_j}f(x_1,\dots, x_N)\end{equation}
with critical locus reduced to the the origin of $\CC^N$. 
The Cadman--Vistoli square root construction $\mf Z=\PP(\pmb w)_{\Ocal(d),f,2}$ 
is a stack $\mf Z$ for which there exists 
a morphism $p\colon \mf Z\to \PP(\pmb w)$
with a 
line bundle $M$ and an isomorphism $M^{\otimes 2}\to p^*\Ocal(d)$. Its canonical 
bundle $\omega_{\mf Z}$ equals $p^*\omega_{\PP(\pmb w)}\otimes M$ whose square 
is equipped with the trivialization 
\[\omega_{\mf Z}^{\otimes 2}=p^*\omega_{\PP(\pmb w)}^{\otimes 2}\otimes M^{\otimes 2}=
p^*(\omega_{\PP(\pmb w)}^{\otimes 2}(d))\cong \Ocal_{\mf Z}.\]
The corresponding étale double cover of $\mf Z$ can be realized as the stack 
\[\mf X=\{x_0^2+f(x_1,\dots,x_N)=0\}\subset \textstyle{\PP\left(\frac{d}2,w_1,\dots,w_n\right)}\]
with the involution $\sigma\colon (x_0,x_1\dots,x_N)\mapsto (-x_0, x_1,\dots,x_N)$.
In this context, Theorem A below 
applies to 
mirror pairs defined by an explicit construction due to 
 Berglund and Hübsch \cite{BH}, and 
is the generalized version of \eqref{eq:fixedlocusMSintro} and 
\eqref{eq:anti-inv_intro}. 
It applies more generally to the 
equivariant semi-Calabi--Yau 
\begin{equation}\label{eq:semiprototype} \mf X=[\{x_0^2+f(x_1,\dots,x_N)=0\}/{H_0}]\to [\mf X/\sigma]=\mf Z. \end{equation}
Here 
$H$ is a group of diagonal morphisms $\diag(\al_0,\dots, \al_N)$ 
of determinant $1$
preserving 
the polynomial $x_0^2+f$. The group 
$H_0$ is 
 the quotient of $H$ by the subgroup of 
actions of the form $(\la^{d/2}, \la^{w_1},\dots,\la^{w_N})$ with $\la \in \GG_m$.

\subsection{Berglund--Hübsch mirror duality.}
The mirror Theorem A below relates two semi-Calabi--Yau's of the form 
\eqref{eq:semiprototype} attached to two pairs $(f,H)$, where $f$ and $H$ are a polynomial  and a group as above.
We assume 
$f(x_1,\dots, x_N)=\sum_{j=1}^N x_j^{m_{i,j}},$
with $m_{i,j}\in \NN$ and $M=(m_{i,j})$ invertible.
Then, by transposing $M$, we set 
\[f^\vee(x_1,\dots, x_N)=\sum_{j=1}^N \prod_{j=1}^N x_j^{m_{j,i}},\]
and we have a canonical isomorphism $\Hom(\Aut_{\diag}(x_0^2+f);\GG_m)\cong 
\Aut_{\diag}(x_0^2+f^\vee)$, where $\Aut_{\diag}(P)$ denotes the group of all
diagonal symmetries preserving a polynomial $P$.
In this way, for each subgroup $H\into \Aut_{\diag}(x_0^2+f)$ as above, we 
set 
\[H^\vee=\ker \left(\Aut_{\diag}(x_0^2+f^\vee)\twoheadrightarrow \Hom(H;\GG_m)\right).\]
We assume that $H$ contains $(\la^{d/2}, \la^{w_1},\dots,\la^{w_N})$
for $\la=\xi_d$, \emph{i.e.} the monodromy operator of the fibration $x_0^2+f$ over $\CC^\times$; then, all the relevant properties are preserved by the duality $(f^\vee,H^\vee)$: the only critical 
point of $f^\vee$ is the origin, \eqref{eq:degf} holds for $f^\vee$, and $H^\vee$ 
is formed by diagonal matrices of determinant $1$ (see \S\ref{sect:dualpolys}).
Before stating the generalized version of \eqref{eq:fixedlocusMSintro} and 
\eqref{eq:anti-inv_intro}, let us specify the relevant orbifold cohomology 
of a fixed locus within a stack.

\subsection{Orbifold cohomology classes depending on an automorphism.}
Orbifold Chen--Ruan cohomology groups $H_{\CR}^*(\mf X ;\CC)$ 
of a smooth Deligne--Mumford stack $\mf X$ are 
the cohomology groups of the fiber product 
\[\mathfrak X{\underset{\id, \ \mathfrak X\times \mathfrak X, \ \id}{\times}}\mathfrak X\]
via the graph of the identity morphism, \emph{i.e.} the diagonal (see definitions \ref{defn:sigmainertiastack} and \ref{defn:sigmainertiastack}). 
The grading is obtained after a shift with 
respect to the locally constant ``age'' function, see \S\ref{subsect:age}. 
Whenever the stack is Gorenstein (\emph{i.e.} the stabilizers locally operate 
with determinant $1$) and a crepant resolution $\wt X$ of 
the coarse space $X$ of $\mf X$ exists, 
these orbifold cohomology groups are isomorphic to the 
ordinary cohomology $H^*(\wt X;\CC)$. 

We generalize the definition and introduce 
$\sigma$-orbifold cohomology classes $H_\sigma^* (\mf X;\CC)$ as the cohomology of 
\[\mathfrak X{\underset{\sigma, \ \mathfrak X\times \mathfrak X, \ \id}{\times}}\mathfrak X\]
with respect to the graph of an automorphism $\sigma\colon \mf X\to \mf X$ (see Definition \ref{defn:sigmainertiacohomology}). 
This is a bi-graded group as above, with age-shifted grading, see \S\ref{subsect:age}.
For the Calabi--Yau orbifolds studied here, we prove that, for $\dim(\mf X)=2$ and $\sigma$ anti-symplectic,
the cohomology of the fixed point set $\wt X_\sigma$ of the minimal resolution $\wt X$ of the Gorenstein 
coarse space $X$
satisfies \begin{equation}\label{eq:crepantintro}
H^{p,q}(\wt X_\sigma;\CC)\cong H_\sigma^{p+\frac12,q+\frac12}(\mf X;\CC),\end{equation}
see Proposition \ref{pro:crthm}. We refer to Proposition \ref{pro:alldim} for 
a generalization in all dimensions conditional to the existence of a crepant resolution and a lift of 
the involution $\sigma$.

\subsection{A mirror symmetry theorem for semi-Calabi--Yau orbifolds.}
We finally state the refinement of the ordinary cohomological mirror
theorem. 

\medskip 

\noindent \textbf{Theorem A (Semi-Mirror Theorem, Thm.~\ref{thm:semimirror}).} \emph{
Let $(f,H)$ and $(f^\vee, H^\vee)$ be two polynomials as above. 
Consider the quotient stacks $\mf X$ and $\mf X^\vee$ defined 
as the vanishing locus of $x_0^2+f$ and $x_0^2+f^\vee$ 
modulo $H_0$ and $(H^\vee)_0$ (with $\sigma$-involution).
We have }
\begin{enumerate}[(i)]
\item
$H^{p,q}_{\CR}
(\mf X;\CC)^\pm\cong
H^{n-1-p,q}_{\CR}(\mf X^\vee;\CC)^\mp;$
\item
$H^{p,q}_\sigma(\mf X;\CC)\cong
H^{n-1-p,q}_\sigma(\mf X^\vee;\CC).$
\end{enumerate}

\medskip 

\noindent Via \eqref{eq:crepantintro}, the above result specializes to 
\eqref{eq:fixedlocusMSintro} and \eqref{eq:anti-inv_intro}.
See Corollary \ref{cor:crepantsemi} for a statement in all dimensions.

\subsection{Borcea--Voisin duality in any dimension.}\label{subsect:BValldim_intro}
A direct consequence of the above semi-mirror Theorem A
is the ordinary mirror symmetry duality of Borcea--Voisin type 
in any dimension. We refer the reader to 
the statement of Theorem \ref{thm:mirror}
for a more general statement involving several group quotients of 
the stack $\prod_{i=1}^n \mf X_i$. 

\medskip

\noindent \textbf{Theorem B (Borcea--Voisin Mirror Theorem, Thm.~\ref{thm:mirror}).}
\emph{For any $i=1,\dots, 2n$, let $(f_i,H_i)$ be pairs as above 
defining an $m_i$-dimensional stack $\mf X_i$ with involution $\sigma_i$. 
Then we have }
\[H_{\CR}^{p,q}\left(\left[\prod_i \mf X_i/(\sigma_1,\dots, \sigma_{2n})\right];\CC\right)\cong 
H_{\CR}^{\sum_i m_i -p,q}\left(\left[\prod_i \mf X_i^\vee/(\sigma_1,\dots, \sigma_{2n})\right];\CC\right).\]

\medskip 

\noindent 
This statement turns into an ordinary cohomology statement whenever crepant resolutions on the two sides exist.
To the best of our knowledge, this mirror symmetry construction 
is new. It also provides many examples of orbifold Calabi--Yau in our weak sense of 
geometric object (orbifold or varieties) for which $\omega$ is trivial (see Remark \ref{rem:sloppyCY}). It is easy to see that the orbifolds do not necessarily satisfy $h_{\CR}^{i,0}=0$ if $i\neq 0$ or $i \neq \dim$; for instance, consider
the case of the product of four Fermat elliptic curves modulo the above involution yielding $h^{2,0}_{\CR}=6$ (and $h_{\CR}^{0,0}=h_{\CR}^{4,0}=1, h_{\CR}^{1,1}=h_{\CR}^{3,1}=16, h_{\CR}^{2,2}=36$). 

\subsection{The proof via unprojected Landau--Ginzburg models.}
The Jacobi ring of a singularity 
has a natural orbifold version known as the FJRW 
or Landau--Ginzburg state space $H^*_{\mathrm{FJRW}}(x_0^2+f,H)$. It was proven by the first author and Ruan \cite{ChiRuLGCY} that $H^*_{\CR}(\mf X;\CC)$ and $H^*_{\mathrm{FJRW}}(x_0^2+f,H)$ are isomorphic if $\omega_{\mf X}\cong \Ocal_{\mf X}$ (LG/CY correspondence). 
We provide a $\sigma$-orbifold version
by recasting the FJRW states space into 
an ``unprojected'' Landau--Ginzburg (LG) state space (see \eqref{eq:unproj}) $
\mathcal H_K(x_0^2+f)^H\supseteq H^*_{\mathrm{FJRW}}(x_0^2+f,H)$ already 
considered by Krawitz~\cite{Krawitz} (the $H$-invariant 
Jacobi ring orbifolded on $K=H[\sigma]$).

The proof can now be carried out in terms of this unprojected LG model, 
that, under the LG/CY correspondence, embodies three invariants 
\begin{enumerate}
\item $\sigma$-invariant classes of $\mf X$;
\item $\sigma$-anti-invariant classes of $\mf X$;
\item $\sigma$-invariant $\sigma$-orbifold classes of $\mf X$;
\end{enumerate}
(there are no $\sigma$-anti invariant $\sigma$-orbifold classes of $\mf X$ 
as the reader may expect from \eqref{eq:crepantintro}, 
which relates $\sigma$-orbifold classes 
to the fixed point set of the resolution).
 Under mirror symmetry the groups $H$ and $K$ switch: 
\[ \mathcal H_K(x_0^2+f)^H=\mathcal H_{H^\vee}(x_0^2+f^{\vee})^{K^\vee}. \]
Unfortunately, the LG/CY correspondence does not apply to 
the unprojected state space on the right hand side (this happens because the group 
duality reverses the inclusions and yields a too small group $K^\vee$). However, 
we can remedy to this, after 
a simple isomorphism (see Lemma \ref{lem:strippingtwisting}), a contraction of the form 
 \begin{equation}\textstyle\label{eq:adhoc}\fie(x_1,\dots, x_N) dx_0\wedge \bigwedge_{i=1}^N dx_i \mapsto \fie(x_1,\dots, x_N) \bigwedge_ {i=1}^N dx_i \mid _{\CC^N}.\end{equation}
Ultimately LG/CY correspondence can be applied and we notice that 
mirror symmetry operates an exchange of 
lines $(1)$ and $(2)$ and maps $(3)$ to its mirror analogue. This is the semi-mirror Theorem A above.

\subsection{Berglund--Hübsch mirror symmetry for K3 surfaces.}
In \cite{ABSBHCR}, Artebani, Boissière and Sarti considered the case of K3 surfaces arising 
from Berglund--H\"ubsch mirror symmetry, and checked that Berglund--H\"ubsch duality 
is consistent with the mirror symmetry construction based on Nikulin's classification. 
Nikulin's classification can be phrased in terms of the invariants 
$h^{0,0}(S_\sigma)$ and $h^{1,0}(S_\sigma)$, and a third invariant $\delta\in \ZZ/2$ 
vanishing if and only if $[S_\sigma]\in 2H^*(S;\ZZ)$. Artebani--Boissière--Sarti's check 
consist in proving that $h^{0,0}(S_\sigma)$ and $h^{1,0}(S_\sigma)$ are exchanged 
and that the property $[S_\sigma]\in 2H^*(S;\ZZ)$ is preserved. The 
first claim is a corollary of the Semi-Mirror Theorem and \eqref{eq:crepantintro}. Since \cite{ABSBHCR}
 relies on explicit case-by-case resolution, our main result simplifies 
their proof a great deal. 
As far as the property $[S_\sigma]\in 2H^*(S;\ZZ)$ goes, its conservation 
under mirror symmetry does not appear to follow from our LG/CY methods. 

\subsection{Other related works.}
In his early paper \cite{Borcea}, Borcea already highlighted 
the importance of properties \eqref{eq:fixedlocusMSintro} and
\eqref{eq:anti-inv_intro}. In \cite[\S2, \S10]{Borcea} (``Higher dimensions''), 
he went further to 
consider mirror pairs of Calabi--Yau varieties with involutions in higher dimension, 
and to check that 
the Euler characteristics $\chi(S),\chi(S/\sigma), \chi(S_\sigma)$ 
all change by $(-1)^{\dim(S)},(-1)^{\dim(S)}, (-1)^{\dim(S)-1}$ under mirror symmetry in dimension 3 and 4,
as one can now deduce from the Semi-Mirror Theorem in Berglund--H\"ubsch setup. 
In \cite{Dillies}, 
this approach is pushed further by Dillies 
to a proof of cohomological mirror symmetry for crepant resolutions of dimension $3$ and $4$. 
Crepant resolutions of quotients of 
products of Calabi--Yau with an involution 
fixing a smooth divisor are provided by Cynk and Hulek's work \cite{CyHu}.
The case of higher-order automorphisms is considered in 
\cite{Dillies} as well as \cite{ComparinPriddis} and \cite{CLPS}.
Propositions \ref{pro:crthm} and \ref{pro:alldim} are related to
Ruan's Crepant resolution conjecture \cite{RuanCRC}. 
Finally, very recently, Hull, Israel and Sarti used mirror symmetry for K3 surfaces to form ``non-geometric backgrounds'' in the physics paper \cite{HIS}.

\subsection{Contents}
In \S\ref{sect:terminology}, we recall terminology briefly. 
In \S\ref{sect:preliminaries}, we recall some basic definitions about Berglund--Hübsch invertible polynomials.
In \S\ref{sect:orbifolds}, we treat orbifold cohomology, its $\sigma$-orbifold variant, and we prove the compatibility result \eqref{eq:crepantintro} stated above.
In \S\ref{sect:LG-models}, we prove all the relevant statements at the level 
of Landau--Ginzburg state spaces. 
In \S\ref{sect:involutions}, we derive the corresponding geometric versions stated above, 
see in particular \S\ref{subsect:geomMS} with some examples. 
Relation to K3 surfaces is treated in \S\ref{subsection:ABS}; we compare to the 
approach of \cite{ABSBHCR} in Example \ref{exa:inhomog}.
The higher dimensional Borcea--Voisin mirror theorem is deduced in \S\ref{sect:generalized-BV}. 

\subsection*{Acknowledgements}
We are grateful to Alfio Ragusa, Francesco Russo and Giuseppe Zappalà for organizing 
Pragmatic 2015, where this work started. 
We are grateful to Tom Coates, Baohua Fu, Lie Fu, Matthieu Florence, Behrang Noohi, Alessandra Sarti, Angelo Vistoli, Claire Voisin, and Takehiko Yasuda for their advice. This work was finalized at ETH, Zurich, and Imperial College, London; the first named author 
thanks these institutions for their hospitality. 

The work of the second author was supported by the Engineering and Physical Sciences Research Council [EP/L015234/1],  and the EPSRC Centre for Doctoral Training in Geometry and Number Theory (The London School of Geometry and Number Theory), University College London. 

The third author would like to acknowledge the support of the GRK 1436 "Analysis, geometry and string theory". 

Finally, we are extremely grateful to the anonymous referee for his careful reading and for his precious comments which helped improving and making much more precise the main statements of the paper.

\section{Terminology} \label{sect:terminology}

\subsection{Conventions}
We work with schemes and stacks over the complex numbers. All schemes are Noetherian and separated.
By linear algebraic group we mean a closed subgroup of 
$\GL_m(\CC)$ for some $m$.
We often need to identify 
a stack locally. 
In order to avoid repeated mention of 
\'etale localization or strict Henselizations, we often use the expression 
``the 
local picture of the stack $\mf X$ at the geometric point $x\in \mf X$ is the same as $\mf U$ at $u\in \mf U$''. 
By this we mean that the strict Henselization of $\mf X$ at $x$ is the same of that 
of $\mf U$ at $u$. 
Often it is enough to say that there is an \'etale neighbourhood $X'$ of $x$ 
 and an isomorphism $X'\to U'$ with an \'etale neighbourhood $U'$ of $u$.
We refer to \cite[54.33.2]{stacksproject} for a definition of the 
strict Henselization 
and to \cite[\S1.2,5]{ACV} for further discussion (see in particular the ``algebra-to-analysis translation'', where strict Henselizations 
are described analytically as the germ of $X$ at $x$).

\subsection{Notation} We list here notation that occurs throughout the entire paper. 

\vspace{5pt} 

\begin{tabular}{ll}
$V^K$& is the invariant subspace of a vector space $V$ linearized by a finite group $K$.\\
$\PP(\pmb w)$ & is the quotient stack $[(\CC^n\setminus \pmb 0)/\GG_m]$, where $\GG_m$ acts with weights $\pmb w$.\\
$Z(f)$ & is the variety defined as 
 zero locus of $f\in \CC[x_1,\dots, x_n]$.\\
\end{tabular}

\begin{rem}[zero loci]\label{rem:zeroloci} We add the subscript 
$\PP(\pmb w)$ when we refer to the zero locus in $\PP(\pmb w)$ of a polynomial $f$ which is $\pmb w$-weighted homogeneous. In this way we have
\[Z_{\PP(\pmb w)}(f)=[U/\GG_m],\qquad \text{\ \ with $U=Z(f)\subset \CC^n\setminus \pmb 0$.}\]
\end{rem}

\begin{rem}[graphs and maps]\label{rem:abuse_notn_graph}
Given an automorphism $\alpha$ of $\mathfrak X$, 
we write $\Gamma_\alpha$ for 
the graph $\mathfrak X\to 
\mathfrak X\times \mathfrak X$. However, to simplify 
formul\ae, 
we often abuse notation 
and use $\alpha$ for the graph $\Gamma_\al$ as well as the automorphism. In this way, the diagonal 
$\Delta \colon \mathfrak X\to \mathfrak X\times \mathfrak X$ will be often written as $\id_{\mathfrak X}$ or simply $\id$.
\end{rem}

\section{Berglund--Hübsch polynomials} \label{sect:preliminaries}

The setup presented here is due to Berglund--H\"ubsch \cite{BH}. We also refer to 
\cite{Berglund-Henningson,GZE_Saito, Gusein-Zade-Ebeling, KreSka, Krawitz}. It can be motivated 
as the simplest generalization of Greene--Plesser mirrors.
\subsection{Invertible polynomials} \label{sect:invertibleW}
Let
\begin{equation}\label{eq:invertibleW}
W(x_1,\dots,x_n)=\sum_{i=1}^n \prod_{j=1}^n x_j^{m_{i,j}},\end{equation}
be a quasi-homogeneous polynomial of weights $w_1,\dots,w_n$ 
and degree $d$. 
The polynomial is said to be \emph{invertible} if the matrix $M =(m_{i,j})$ admits an inverse 
$M^{-1}=(m^{i,j})$. 
We could more naturally start from a polynomial of 
the form $\sum_{i=1}^n c_i\prod_{j=1}^n x_j^{m_{i,j}}$ (with $c_i\neq 0$), but after rescaling the variables $x_j$ 
we can reduce to the above case, without loss of generality. 
We always assume $W$ to be \emph{non-degenerate}, \emph{i.e.}, regarded as a complex valued function, we have $\partial W(x_1,\dots,x_N)/\partial x_j=0$ for every $j$ only at $(x_1,\dots,x_N) = (0,\dots,0)$. 

Non-degeneracy is a very restrictive condition, and complete classification of non-degenerate polynomials is given in \cite{KreSka} (see also \S5 and Theorem~5.2 in \cite{FavKel}). 
We do not use this classification, but we recall it briefly. 
After permutation of the variables, the matrix necessarily decomposes into irreducible $1\times 1$ blocks within $\ZZ_{\ge 1}$ (Fermat blocks) and blocks 
of the form $k\times k$ (with $k>1$) with $a_{i,j}=0$ for $j-i\not\in \{0,1\}+k\ZZ$,
$a_{i,i}\in \ZZ_{\ge 1}$ 
($1\le i \le k$), $a_{i,i+1}=1$ ($1\le i < k$), and $a_{k,1}=1$ (loop blocks) or $a_{k,1}=0$ (chain blocks). The polynomials corresponding to the blocks described above are usually referred to as Fermat, loop, and chain polynomials. 

\subsection{Calabi--Yau varieties}
The \emph{charge} of the variable $x_i$ is defined as the ratio $q_i := w_i/d$; it is uniquely determined by $W$, as the sum $q_i = \sum_j m^{i,j}$ of the entries of the $i$th line of $M^{-1}$.
We say that $W$ satisfies the \emph{Calabi--Yau condition}, or that $W$ is a Calabi--Yau invertible polynomial, if we have
\begin{equation}\label{eq:CY}
{\textstyle{\sum_{j}}} w_j=d,
\end{equation}
or, equivalently, if the sum of all the entries of $M^{-1}$ is $1$.

\begin{rem}\label{rem:indices} The set of data $(w_1,\dots,w_n;d)$ is uniquely determined by $W$ as soon as we reduce these indices so that $\gcd(\pmb w)=1$. Note that the Calabi--Yau condition implies $\gcd(\pmb w)=\gcd(\pmb w,d)$.
\end{rem}

\begin{rem}\label{rem:quasismooth} The non-degeneracy condition is equivalent to the smoothness of the 
vanishing locus
$Z_{\PP(\pmb w)}(W)$ of $W$ within 
the stack $\PP(\pmb w)$. The coarse space of 
the hypersurface 
$Z_{\PP(\pmb w)}(W)$, within the coarse space 
of $\PP(\pmb w)$, may be singular but quasi-smooth 
in the sense of \cite[App. B]{Dimca} (see 
Remark \ref{rem:Dimca}). 
\end{rem}

By the adjunction formula, condition \eqref{eq:CY} is equivalent to 
the fact that the canonical bundle of 
$Z_{\PP(\pmb w)}(W)$ is trivial. 
This justifies the term ``Calabi--Yau'',
and provides an important 
source of examples of Calabi--Yau orbifolds, yielding
Calabi--Yau varieties whenever there exists a 
crepant resolution $f\colon \wt Z\to Z_{\PP(\pmb w)}(W)$ (\emph{i.e.} with $f^*\omega=\omega_{\wt Z}$).
This occurs for instance in dimension $\le 3$.

\subsection{Finite order diagonal actions}\label{sect:notnfiniteorderaction}
Let $\al\in \GL_m(\CC)$ be an $m\times m$ diagonal matrix of finite order with complex coefficients. The entries along the diagonal 
are necessarily roots of unity; for sake of simplicity, we write
\begin{equation}\label{eq:Reidnotn}
\al = (a_1,\dots,a_m), \quad a_j \in \QQ, \quad 0 \leq a_j < 1
\end{equation}
if the $j$-th diagonal entry of the diagonal matrix $\al$ is $\exp(2\pi\cxi a_j)$. Each $\alpha_j$ is uniquely determined and the \emph{age} of~$\al$ is defined as 
\[	\mathrm{age}(\al):={\textstyle \sum_j }a_j. \]

For any polynomial $f = f(x_1,\ldots,x_m)$ in $m$ variables and for any 
$\al$ acting diagonally on the domain of $f$, we denote by 
$f_\al$ the restriction to the fixed space $\CC^m_\al$ spanned 
by the fixed variables $x_j\mid \al^*x_j=x_j$. We 
often use the set of labels of the fixed variables, and we 
denote it by 
\begin{equation*}\label{eq:indicesfixedvariables}
F_\al=\{j\mid \al^*x_j=x_j\};\qquad 
\CC^m_\al=\Spec \CC[x_j\mid j\in F_\al].\end{equation*}

Given an invertible polynomial $W$ as in \eqref{eq:invertibleW}, let $\Aut_W=\Aut_{\mathrm{diag},W}$ be the group of 
diagonal matrices $\al$ such that $W(\al^*\pmb x)=W(\pmb x)$. 
The fact that $\Aut_W$ is finite is a consequence of the invertibility of the matrix $M=(m_{i,j})$: regard $M$ as a linear map $\QQ^n\to \QQ^n$ so that 
$\Aut_W$ is the quotient of the 
rank-$n$ lattice $M^{-1}\ZZ^n$ by the sublattice $\ZZ^n$.

We also consider 
\[\SL_W:=\SL_n(\CC) \cap \Aut_W,\]
and the order-$d$ group generated by 
the so-called \emph{grading element} of $\Aut_W$
\[j_W:=(q_1,\dots,q_n).\]
Without mentioning the charges $q_j$, this can be defined as the monodromy operator of the fibration 
$W\colon \CC^n\setminus W^{-1}(0)\to \CC^\times$.

\subsection{A combinatorial reinterpretation} Although this plays no relevant role in the statements of this paper, it is worth pointing out that 
the above data $(m_{i,j})$ may be phrased as follows. 
The matrix $(m_{i,j})$ is an integer, non-degenerate pairing
between two lattices $E=\bigoplus_i e_i\ZZ $ and $F=\bigoplus_j f_j \ZZ$ with $\langle e_i, f_j\rangle=m_{i,j}\in \ZZ\ge 0$. In this way $F$ (resp. $E$) is a rank-$N$ sublattice of $E^\vee$ (resp. $F^\vee$). 
As mentioned above, the 
group of diagonal automorphisms $\Aut_W$ is merely 
the quotient $E^\vee/F$. 

\begin{rem}\label{rem:lattices}
The injective map $F\to E^\vee,\ f_j\mapsto \langle \_, f_j\rangle$ is represented by $M=(m_{i,j})$
and the map $E\to F^\vee,\ e_i\mapsto \langle e_i,\!\_\rangle$ is represented by the transpose $M^T=(m_{j,i})$. This yields a canonical automorphism between 
the group of characters $(\Aut_W)^*=\Hom(\Aut_W, \GG_m)$ and the above group of diagonal 
automorphisms relative to the polynomial whose exponents 
are given by the transpose matrix $M^T=(m_{j,i})$.
We refer to \cite{Berglund-Henningson} and \cite[Prop. 2]{GZE_Saito}.

We will restate and 
rephrase again this transposition property
when we will introduce mirror symmetry in 
Section \ref{sect:LG-models}. 
\end{rem}

\begin{rem}
The setup presented here naturally 
yields a reformulation in toric geometry. 
We refer to \cite{BorisovBH} and \cite{FavKelDerCat}.
\end{rem}

\section{Orbifold cohomology classes}\label{sect:orbifolds}
\subsection{Background hypotheses} 
We provide a presentation of orbifold cohomology classes with some generalizations to the standard setup. 
As a special case, we recall the definition of Chen--Ruan cohomology groups.
We recall that a Deligne--Mumford orbifold 
is a smooth separated Deligne--Mumford stack with a
dense open subset isomorphic to an algebraic variety. We work under the following assumption. 
\begin{assume}\label{background_assumption} We consider
a Deligne--Mumford orbifold $\mf X$ equipped with an automorphism 
$\sigma\colon \mf X\to \mf X$ of finite 
order $h$. 
\end{assume}

We often specialize our treatment to quotient stacks $[U/G]$, where $G$ is a linear algebraic group. An algebraic group $G$ acts properly 
on a scheme $U$ if the map $G\times U\to U\times U,\ (g,x)\mapsto(x,g\cdot u)$ is proper. In general, the stack $[U/G]$ 
is a separated Deligne--Mumford stack if and only if $G$ acts properly
on $U$ (see for instance \cite[\S2.1]{logtraceEJK}). 
Since $G$ is affine, the map $G\times U\to U\times U$ is finite if it is proper. 
In particular, the stabilizers $G_x=\{g\mid g\cdot x=x\}$ are finite for all $x\in U$.

When in the special case of a quotient stack as above, we will usually make the further assumption as follows.
\begin{assume}[Quotient stacks]\label{specific_assumption}
Given Assumption \ref{background_assumption},
we further assume that $\mf X$ admits a presentation as a 
quotient stack $[U/G]$, where $U$ is a smooth scheme and $G$ is a linear algebraic group acting properly on $U$. Moreover, we assume that the automorphism of $[U/G]$ is 
represented by an automorphism of schemes 
$\sigma\colon U\to U$ and that 
$G$ and $\sigma$
satisfy $\sigma G\sigma^{-1}=G$ and $\sigma^h\in G$ within 
a linear algebraic group acting properly on $U$. 
\end{assume}

\begin{rem} An automorphism $t$ of 
$\mf X=[V/H]$ of order $h$ does not necessarily lift to $V$. However, if 
$\mf X$ can be represented as a quotient stack, then it is always possible to find a representation such that Assumption \ref{specific_assumption} holds. 
Namely, there always exists 
a scheme $U$, a group $G$ operating on $U$ so that $[U/G]=[V/H]$, and 
an automorphism $\sigma$ of $U$ and $G$ inducing the automorphism $t$. 
For instance, as was suggested to us by Angelo Vistoli, let $U$ be 
the fiber product of the schemes $V_i=(t^i)^*V$ over $\mf X$ for $i=0,\dots, h-1$ and
let $G$ be $H^h$ with the obvious action on $U$. 
Then, $\sigma$ operates on $U$ and, compatibly, on $G$: it is 
the cyclic permutation of the $h$ factors of $U$ and of the factors
of $G$. In this way $\sum_{i=0}^{h-1}\sigma^iG$ can be naturally turned into a group acting properly acts on $U$ and, within it, $G$ and $\sigma$ satisfy 
$\sigma G\sigma^{-1}=G$ and $\sigma^h\in G$.
\end{rem}

\subsection{Applications}\label{subsect:applications}
The subject of this paper are Deligne--Mumford stacks $\Sigma_{W,G}$ 
presented as quotients 
of a scheme 
$U\subseteq \CC^n\setminus \pmb 0$ ($U$ is the locus where $W$ vanishes) 
by a properly acting group $K$ (see Section \ref{sect:orbdata} for full definition). In these cases, we can lift $\sigma$ to $U$ directly
and naturally fit the hypotheses of Assumption \ref{specific_assumption}. 
The group $K$ is an extension of a finite group of diagonal 
automorphisms in $\GL(n;\CC)$ by the above mentioned 
weighted representation of $\GG_m$ 
\[ \fie_{\pmb w}\colon \GG_m\to \GL(n;\CC),\ \ \la \mapsto (\la^{w_1},\dots,\la^{w_n}), \]
where $\pmb w$ is a vector of $n$ coprime positive integers. 
We notice that, in this case, 
the fixed spaces $U_g=\{x=(x_1,\dots, x_n)\in U\mid g\cdot x=x\}$ are non-empty only for a finite number 
of elements $g\in K$. The scheme $U$ possesses an involution $\sigma \colon U\to U$ given by a diagonal automorphism in $\GL(n;\CC)$ acting by multiplication by $-1$ on the first coordinate. 
Since $\sigma$ commutes with all diagonal automorphisms, we immediately have that
$G$ and $\sigma$ satisfy $\sigma G\sigma^{-1}=G$ within 
$G(\sigma)$, which acts properly on $U$. 
 
Second, in Theorem \ref{thm:mirror}, we need to treat the product of 
$r$ factors of type $\Sigma_{W,G}$ modulo products of involution of the type 
described above. 
Again we can present this in the setup of quotient stacks $[U/G]$ 
easily with the involutions lifting 
directly on $U$. 
For $r>1$ and $n=n_1+\dots+n_r$, 
we consider the vanishing locus $U$ of several 
non-degenerate polynomials $W_i(x_{0,i},\dots,x_{n_i,i})$ 
within $$\CC^{n}-
Z=\{(x_{1,1},\dots,x_{n_1,1},\dots,x_{1,r},\dots, x_{n_r,r}) \mid x_{1,i} = \cdots =x_{n_i,i}=0 \text{ for some } i\}.$$ 
We consider a group $K$, which is the 
extension of a finite group of diagonal 
automorphisms in $\GL(n;\CC)$ by the group 
$$(\fie_{\pmb w_1},\dots,\fie_{\pmb w_r})\colon (\GG_m)^r\into \GL(n;\CC)$$
(each index $\pmb w_i$ is a vector of $n_i$ coprime positive coordinates). 
The group $K$ operates on 
$U\subseteq \prod_{i=1}^r (\CC^{n_i}\setminus \pmb 0)$. 
Again, the involutions we want to consider naturally lift to the scheme $U$. 
Again, notice that $U_g$ is non-empty only for a finite number 
of elements $g\in G$. 

\subsection{Inertia stacks}\label{sect:thestackweconsider}
Our discussion of orbifold cohomology classes 
will require two ingredients usually referred to as the ``inertia'' and the ``age''.
Inertia constructions are natural geometric objects keeping track of geometric points and elements of their stabilizers. 
The age is a locally constant, positive, $\QQ$-valued function defined on them.

We work under Assumption \ref{background_assumption}; \emph{i.e.} $\mf X$ is a Deligne--Mumford stack.
The inertia stack is a fiber product
\begin{equation}\label{eq:inertiastack}\mathfrak I_{\mathfrak X}:=\mathfrak X{\underset{\id, \ \mathfrak X\times \mathfrak X, \ \id}{\times}}\mathfrak X,\end{equation} 
with respect to the diagonal morphism $\mathfrak X \to \mathfrak X\times \mathfrak X$, which is denoted by $\id$ instead of $\Gamma_{\id}$ as mentioned above.
The stack $\mathfrak I_{\mathfrak X}$ is a category whose objects over 
a scheme $T$ are 
pairs $(\ga,\xi)$, where $\xi$ is an object of $\mf X$ over $T$ 
and $\ga$ is an element of $\Aut_T(\xi)$; these objects form a groupoid 
whose isomorphisms are given by $(\ga, \xi)\to 
(\al \ga \al^{-1}, \al \xi)$ for any automorphism 
$\al\in \Aut_T(\xi)$.

For $T=\Spec \CC$, this allows us to describe the 
geometric points of the inertia stack as pairs $(g,x)$ 
given by geometric points $x$ and automorphisms $g$ of $x$ up to $(g,x)\cong(\al g \al^{-1},\al x=x)$. In this way, the fiber over a geometric 
point $x\in \mf X$ is a disjoint union of stacks of the 
form $BH$ in one-to-one correspondence with 
the conjugacy classes of $G=\Aut(x)$ with $H$ equal to the 
centralizer of each class.

For a quotient stack $\mathfrak X= [U/G]$, 
where $U$ is a smooth scheme and $G$ is a 
linear algebraic group acting properly on $U$,
we can provide a presentation of the inertia stack 
in terms of quotient stacks
\[\mathfrak I_\mathfrak X=[I_G(U)/G],\]
where $I_G(U)$ and the action of $G$ are 
recalled below following the statements of
\cite[Lem.~70.25.1]{stacksproject}, 
which apply to the more general framework of groupoid schemes (in \cite{stacksproject} the group scheme $I_G(U)\to U$ is defined as the ``stabilizer of the groupoid''; the reader can also find 
the direct definition 
of $I_G(U)$ with its $G$-action in
\cite[\S2.2]{logtraceEJK}).

For any closed subscheme $S$ in $G$, 
the $S$-inertia $U$-scheme
\[I_S(U)=\{(g,x)\in S\times U\mid g\cdot x=x\},\]
can be realized as the base change of 
$(G\text{-action},\id_U)\colon S\times U\to U\times U;$ via the diagonal 
$U\to U\times U$
\[I_S(U)=(S\times U)\times_{U\times U} U.\]
Since $S$ is a closed subscheme of $G$, and 
$G$ acts properly, the scheme $I_S(U)$ is finite over $U$ (we refer to \cite[Defn.~2.3, (i) and (ii)]{logtraceEJK}).

The group $G$ operates on $I_G(U)$ by conjugation
on the first factor and by multiplication on the left on the second factor. Since
$g\cdot x=x$ we have $\al g\al^{-1}\cdot \al x=\al x$. 

\begin{rem} \label{rem:slightgeneralizationS} 
The 
action makes sense on $I_S(U)$ as soon as 
$\al S \al ^{-1}=S$ for any $\al \in G$. Below, we 
use the construction $I_S(U)$ for $S\neq G$ for 
a slight generalization: the $\sigma$-inertia stack.\end{rem}


\begin{defn}[$\sigma$-inertia stack]\label{defn:sigmainertiastack}
For any automorphism $\sigma \colon \mf X\to \mf X$ of finite order, the $\sigma$-inertia stack is given by
\begin{equation}\label{eq:sigmainertiastack}
\mathfrak I^\sigma_{\mathfrak X}:=\mathfrak X{\underset{\sigma, \ \mathfrak X\times \mf X, \ \id}{\times}}\mathfrak X,\end{equation}
where, to simplify notation, $\sigma$ is the graph 
$\sigma\colon \mathfrak X\to \mathfrak X\times \mathfrak X.$
\end{defn}

The automorphism $\sigma$ is a functor on $\mf X$. 
To each object 
$\xi\colon T\to \mf X$ of $\mf X$, we associate $\sigma(\xi)$, 
the composite morphism $\sigma \xi$.
Each morphism $\al$ from $\xi\colon T\to \mf X$ to $\nu\colon S\to \mf X$ 
is a morphism $S\to T$ commuting with $\xi$ and $\nu$. Given such a morphism $\al$,
we get the
corresponding morphism $\sigma(\al)\colon \sigma\xi\to \sigma \nu$. 
Below, we write $\Isom^{\mf X}_T(\xi,\nu)$ for all isomorphisms from $\xi$ to $\nu$ 
inducing the identity on the scheme $T$ and we 
write $\Aut^{\mf X}_T(\xi)$ for all isomorphisms of $\xi$ over $T$ (we drop the 
superscript $\mf X$ if no ambiguity can arise).

We describe the objects and morphisms of 
the groupoid 
$\mathfrak I^\sigma_{\mathfrak X}$ over a scheme $T$.
By the definition of fiber product of categories, the objects are pairs $(\ga, \xi)$, where $\xi$ is an object of $\mf X$ over $T$ 
and $\ga$ is in $\Isom_T(\sigma \xi,\xi)$. 
The isomorphisms of the groupoid are given by 
$(\ga, \xi) 
\to (\al \ga \al^{-1}, \al\xi)$ for $\al\in \Aut_T(\xi)$.

For quotient stacks $\mf X=[U/G]$, \emph{i.e.} under Assumption \ref{specific_assumption}, we can provide a quotient stack presentation of $\mathfrak I^\sigma_{\mathfrak X}$. Here, $\sigma$ and $G$ are contained 
within a group acting properly on $U$ and satisfying $\sigma G\sigma^{-1}=G$. 
Then, the inertia scheme $I_{ G\sigma}(U)$ is the base change of $G\sigma\times U\to U\times U$ via the diagonal $U\to U\times U$.
We have 
\[ \mathfrak I^\sigma_\mf X=[I_{ G\sigma}(U)/G], \]
where $G$ operates as before:
by conjugation
on the first factor and by multiplication on the left on the second factor $\al\cdot(g\sigma, x) =(\al g\sigma\al^{-1}, \al x)$ (it is easy to see 
that conjugation by $\al\in G$ maps $G\sigma$ to itself as a 
consequence of $\sigma G\sigma^{-1}=G$).

\begin{rem}
There is a natural, representable morphism from the 
stack $\mf I^\sigma_{\mf X}$ to the inertia 
stack of $[\mf X/\sigma]$
\[\mf I^\sigma_{\mf X}\longrightarrow \mf I_{[\mf X/\sigma]}.\]
Indeed, recall that $[\mf X/\sigma]$ is the stack associated to the prestack whose objects 
are objects of $\mf X$ and whose morphisms $\al\colon \xi\to \nu$ are pairs $[\sigma^i,\fie]$
with $\fie\colon \sigma^i\xi \to \nu$ (see \cite[Prop.~2.6]{Romagny}). 
For any object $\xi$ of $\mf X$ over $T$ we have a natural isomorphism 
within the category $[\mf X/\sigma]$
\[\sigma=[\sigma, \id_{\sigma\xi}]\in \Isom^{[\mf X/\sigma]}_T(\xi, \sigma\xi)\]
and, by composition, 
a functor associating to the object $(\ga,\xi)$ of $\mf I^\sigma_{\mf X}$ over $T$,
with $\ga$ belonging to $\Isom^{\mf X}_T(\sigma\xi,\xi)$,
the object $(\ga\sigma,\xi)$, where 
\[\ga \sigma\in \Aut^{[\mf X/\sigma]}_T(\xi).\]
The functor lands in the substack of objects of the form 
$(\ga\sigma, \xi)$ with automorphisms $\al\in \Aut^{\mf X}_T(\xi)<\Aut^{[\mf X/\sigma]}_T(\xi)$ acting as described above: $\al\cdot(g\sigma, x) =(\al g\sigma\al^{-1}, \al x)$.
\end{rem}

\subsection{The age function}\label{subsect:age}
If $\mf X$ is smooth, the age function is a non-negative, locally constant function on 
the inertia stack
\[ \age \colon \mf I^\sigma_{\mf X}\to \QQ. \]
We can briefly introduce the age function as follows: 
to each geometric point of $\mf I^\sigma_{\mf X}$, given by 
$(g \in \Isom(\sigma x,x),x\in \mf X)$, we attach a finite-order 
representation $g\sigma$ operating
on the tangent space of $[\mf X/\sigma]$ at $x$. 
We write $g\sigma$ as $(\alpha_1,\ldots,\alpha_n)$ and compute 
$\age(g,x)=\mathrm{age}({g\sigma})$ as in \eqref{eq:Reidnotn}.

The actual definition of $\age$ in terms of 
objects of $\mf I_{\mf X}^\sigma$ over a connected scheme $T$
can be given as in \cite{AGV} through the above morphism 
$\mf I^\sigma_{\mf X}\to \mf I_{[\mf X/\sigma]}.$
To each pair
$(\gamma,\xi)\in \mf I_{\mf X}^\sigma(T)$, we attach $(\gamma\sigma, \xi)\in 
\mf I_{[\mf X/\sigma]}^\sigma(T)$ as above. As pointed out in \cite{AGV}, 
in the presence of distinguished identifications $\pmmu_r\to \ZZ/r$,
the inertia stack decomposes into the disjoint union over $r\in \NN^\times$ of 
cyclotomic inertia 
stacks $\mf I_{\pmmu_r}$ formed by objects $(\tau,\xi)$, where $\xi$ is an object 
of $[\mf X/\sigma]$ over $T$ and $\tau$ is 
an injective morphism from the trivial $\pmmu_r$-group scheme 
$(\pmmu_r)_T$ over $T$ into the automorphism group scheme of $\xi$ over $T$
\[(\pmmu_r)_T\longrightarrow \Aut^{[\mf X/\sigma]}_T(\xi).\]
In this way, the tangent bundle of $[\mf X/\sigma]$ pulls back to a $\pmmu_r$-linearized
bundle over $T$. The age function is the age of the $\pmmu_r$-representation in the sense of Section \ref{sect:notnfiniteorderaction}. Since $T$ is connected 
and the age function is locally constant, we obtain in this way 
the constant function $\age$ on $T$.

For quotient stacks $\mf X=[U/G]$, we can lift 
the function $\age$ to 
a $G$-invariant function on 
the $G\sigma$-inertia $U$-scheme $I_{G\sigma}(U)$ as follows.
The tangent bundle $T_{U}$ 
pulls back to $I_{G\sigma}(U)$ 
via the projection on $U$.
At each geometric point $(g\sigma,x)$ 
of $I_{G\sigma}(U)$, the group element $g\sigma$ 
operates on the $n$-dimensional 
 fiber of $T_U$ at $x$ as 
a finite-order representation $(\alpha_1,\ldots,\alpha_n)$ and 
$\mathrm{age}({g\sigma})$ yields 
a locally constant $G$-invariant function $\age$. 

\subsection{Orbifold cohomology} Orbifold cohomology classes are ordinary 
cohomology classes of the inertia stack bigraded after a shift.

If we ignore the grading,
the Chen--Ruan cohomology of the Deligne--Mumford stack $\mf X$ is simply the cohomology of the inertia stack $\mf I_{\mf X}$, which, in the case of $\mf X=[U/G]$, coincides
with the cohomology of $I_{G}(U)/G$ (the coarse moduli space) over the 
complex numbers. In our setup, $I_G(U)/G$ and $I_{G\sigma}(U)/G$ 
are quasi-smooth and 
admit Hodge decompositions:
\[H^n([I_{G}(U)/G];\CC)=
\bigoplus_{p+q=n}H^{p,q}(I_{G}(U)/G;\CC)\]
and \[ H^n([I_{G\sigma}(U)/G];\CC)=
\bigoplus_{p+q=n}H^{p,q}(I_{G\sigma}(U)/G;\CC).\]

Starting from this 
decomposition of weight $n$, for any $r\in \QQ$, we can produce a new 
decomposition of weight $n-2r$ via a shift analogous to the the Tate twist 
(see for instance \cite[\S4.3]{Yasuda})
\begin{equation}
\label{eq:Tate}H(r)^{p,q}=H^{p+r,p+r}.
\end{equation}
We provide the following definition, which extends the ordinary 
definition of Chen--Ruan cohomology by introducing the $\sigma$-inertia stack.

\begin{defn}\label{defn:sigmainertiacohomology}
The $\sigma$-orbifold 
cohomology is given by 
\begin{align*}
H_{\sigma}^*(\mathfrak X;\CC):&=H^*(\mathfrak I^\sigma_{\mf X};\CC)
(-\age)\end{align*}
\end{defn}

\begin{rem}
Above, the cohomology of the inertia stack 
is shifted by the locally constant function 
$\age$, which 
transforms classes of bidegree $(p,q)$ 
into classes of bidegree $(p+\age,q+\age)$.
Note the abuse of notation: $\age$ is not constant in 
general, but, since it is locally constant, the Tate shift operates independently on each cohomology group arising from each connected component. A precise notation should read 
\[ H^{p,q}(\_;\CC)(\mathfrak a)=\bigoplus_{r\in \QQ_{\ge 0}}H^{p,q}(\mathfrak a^{-1}(r);\CC)(-r). \]
\end{rem}

\begin{rem}[Chen--Ruan cohomology]
The definition of $\sigma$-shifted orbifold cohomology coincides with Chen--Ruan cohomology for $\sigma=\id$; we have
\[ H_{\id}^*(\mathfrak X;\CC)=H_{\CR}^*(\mathfrak X;\CC). \]
\end{rem}

\begin{rem}[quotient stacks]\label{rem:orbcohomquotstack} Under Assumption \ref{specific_assumption}, for $\mf X=[U/G]$, we have 
\begin{align*}
H_{\sigma}^*([U/G];\CC)&=
H^*([I_{G\sigma}(U)/G];\CC)(-\age)=H^*(I_{G\sigma}(U)/G;\CC)(-\age),\end{align*}
where, in the last equality, we have replaced the quotient stack by the coarse quotient,
because the expression only involves ordinary cohomology over complex coefficients.

In the specific cases mentioned in \S\ref{subsect:applications},
we work with abelian groups $G$ that can be expressed the 
extension of a finite group $K$ by a group $T\subseteq G$
$$0\to T\to G\to K\to 0.$$
Then, we can regard the quotient 
$I_{G\sigma}(U)/G$ as a finite group quotient $(I_{G\sigma}(U)/T)/K$.
Since $K$ is finite, we can write 
\[ H_{\sigma}^*([U/G];\CC)=\left(H^*(I_{G\sigma}(U)/T;\CC)(-\age)\right)^K.\]

As mentioned in \S\ref{subsect:applications},
in our setup $I_S(U)$ is the union of finitely many fixed loci in $U$
$$I_S(U)=\bigsqcup_{g\in S}U_g,$$
where $U_g=\{x\in U\mid g\cdot x=x\}.$
Therefore, in these cases, we can write 
\[ H_{\sigma}^*([U/G];\CC)=\bigoplus_{s\in G\sigma} H^*(U_s/T;\CC)(-\age)^K,\]
where the restriction to $K$-invariant loci can be considered summand by summand
because $K$ acts on $(g,x)$ by fixing the first coordinate. 
\end{rem}

%
%

\subsection{Orbifold cohomology groups attached to the data
\texorpdfstring{$W$}{W}, \texorpdfstring{$\Aut_W$}{AutW}, and \texorpdfstring{$\sigma$}{sigma}}{\label{sect:orbdata}}
Let $W$ be an invertible (non-degenerate) polynomial in the sense of 
Section \ref{sect:invertibleW}.
For any subgroup $G$ of $\Aut_W$ containing $j_W$,
we consider the quotient stack 
\[ \Sigma_{W,G}=[Z(W)/G\GG_m], \]
where $Z(W)$ is the zero locus of $W$ in $\CC^n-\{0\}$ and 
$G\GG_m$ is the group of diagonal matrices of 
the form $\diag(\al_1\la^{w_1},\dots,\al_n\la^{w_n})$
with $\diag(\al_1,\dots,\al_n)\in G$ and 
$\la\in \GG_m$. 

\begin{rem}[weighted hypersurfaces]
We notice that the smooth stack 
$Z_{\PP(\pmb w)}(W)$ is a special case of the above construction (see Remark \ref{rem:zeroloci})
\[ Z_{\PP(\pmb w)}(W)=\Sigma_{W,\langle j_W\rangle}. \]
 
Moreover, $\Sigma_{W,G}$ may be regarded as 
the quotient of $Z_{\PP(\pmb w)}(W)$ by 
 \[ G_0=G/\langle j_W\rangle; \] 
according to \cite[Rem.~2.4]{Romagny}, we have 
\[ [Z_{\PP(\pmb w)}(W)/G_0]=[Z(W)/G\GG_m]. \] 

\end{rem}

\begin{rem}[group actions on stacks]\label{rem:Dimca}
The group $G\GG_m$ is an abelian extension of $G_0$ by $\GG_m$. By Remark \ref{rem:orbcohomquotstack}, we have 
\[H_{\CR}^*(\Sigma_{W,G};\CC)=
\bigoplus_{s\in G} H^*(Z(W_s)/\GG_m;\CC)(-\age)^G,\]
where $W_s$ is the restriction of 
$W$ to $\CC^n_s$, $Z(W_s)$ is the zero locus 
within $\CC^n_s\setminus \pmb 0$ and $\GG_m$ operates with weights $\pmb w_s=(w_j\mid j\in F_s)$.
By definition,
the coarse quotient $Z(W_s)/\GG_m$ is quasi-smooth (see Remark \ref{rem:quasismooth}). 
 Notice that we consider 
$G$-invariant classes, which is equivalent to consider $G_0$-invariant classes 
because $j_W$ operates trivially on 
$Z(W_s)/\GG_m$.
\end{rem}

\begin{rem}\label{rem:agerounddown}
The function $\age $ takes the constant value $\mathrm{age}(\gamma)\in \QQ$ on each term $Z(W_s)$, because the age function on such hypersurface is related to that of the weighted projective space $\PP(\pmb w_s)$, where it lies by the following equation.
We have 
\begin{equation}\label{rem:agecorrection}
	\mathrm{age}(s\colon T_{Z(W)}\to T_{Z(W)}) = \mathrm{age}(s \colon T_{\CC^n}\to T_{\CC^n})-\mathrm{age}(s\colon N\to N),
\end{equation}
where $N$ is the normal bundle of $Z(W)$ within $\CC^n$.
Here, all ages are considered at a point $x\in Z(W_s)$.
Now, if we assume that the defining polynomial $W$ has degree $d$, then 
$s=(\al_1\la^{w_1},\dots,\al_n\la^{w_n})\in S\GG_m$ acts as $\la^d$ on the normal line.
This shows in particular that 
the value of $\age$ on $Z(W_s)$
is constant.
The reader may refer to \cite[Lemma~22]{ChRu} for an explicit proof. 
\end{rem}

\begin{rem}[the involution $\sigma$]
In this paper, we work with invertible polynomials of the form \[ W=x_0^2+f(x_1,\dots,x_n).\] 
Then $\Aut_W$ contains a distinguished symmetry $\sigma$ changing the sign of $x_0$ and fixing the remaining coordinates; with notation \eqref{eq:Reidnotn} we write
\[ \textstyle\sigma=\left(\frac12,0,\dots,0\right). \] 
Then, Remark \ref{rem:orbcohomquotstack} reads 
\begin{equation}\label{eq:sigmaorbifoldmadeexplicit}H_{\sigma}^*(\Sigma_{W,G};\CC)=
\bigoplus_{s\in G\sigma} H^*(Z(W_s)/\GG_m;\CC)(-\age)^G\end{equation}
\end{rem}

\begin{rem}\label{rem:integergrading} 
Assume $G\subseteq \SL_W\subset \SL_n(\CC)$; then, 
$\Sigma_{W,G}$ is Gorenstein (the stabilizers locally operate 
with determinant $1$). We have the following consequences. The group 
$H_{\CR}^{p,q}(Z_{W,G};\CC)$ is non-zero only if 
$p,q\in \ZZ$. Similarly, 
$H_{\sigma}^{p,q}(Z_{W,G};\CC)$ is non-zero only if 
$p,q\in \det \sigma+\ZZ=\frac12+\ZZ.$
\end{rem}

\subsection{Orbifold cohomology groups and ordinary cohomology}
The main application of the standard orbifold 
cohomology groups is the crepant resolution theorem proven by Yasuda in \cite{Yasuda}.

\begin{thm}[Yasuda, \cite{Yasuda, YasudaWild}]\label{thm:crthm}
If $\mathfrak X$ is a smooth, Gorenstein, Deligne--Mumford stack 
whose coarse moduli space $X$ admits a crepant resolution 
$\wt X\to X$, then $H^*_{\CR}(\mathfrak X;\CC)$ and $H^*(\wt X;\CC)$ are isomorphic as bigraded vector spaces. \end{thm}

For sake of clarity we provide a precise proof of the statement in the form needed here. 
We are grateful to Yasuda for guiding us through his results and recent improvements.  

\medskip 

\noindent \textit{Proof of Theorem \ref{thm:crthm}.}
It suffices to show that the resolution $\wt X$ 
and the stack $\mf X$ are $K$-equivalent:
there exists a smooth and proper Deligne--Mumford stack $\mf Z$ with birational 
morphisms $\mf Z\to \mf X$ and $\mf Z\to \wt X$ 
with $\omega_{\mf Z/\mf X}\cong \omega_{\mf Z/\wt X}$. 
Then the claim follows from  
\cite[Cor.~4.8]{Yasuda}: we 
deduce the above bigraded isomorphism 
by identifying the two sides  with $H^*_{\CR}(\mathfrak Z;\CC)$. 

The argument is simple but not immediate. In order to see this, let us consider the 
fibred product $\mf Z'=\mf X\times _X \wt X$
and the projections on the two factors
\[
\xymatrix@R=.99pc{
&\mf Z'\ar[dr]\ar[dl] & \\
{\mf{X}}\ar[dr]_{\mf{p}} & & {\wt X}\ar[dl]^{\wt p}\\
& X &
}
\]
whose relative dualizing bundles 
are isomorphic. 
Then we consider the associated reduced stack $\mf Z''$
and the resolution $\mf Z \to \mf Z''$ whose existence is shown
in the second paragraph of section 4.5 of Yasuda's paper \cite{Yasuda} 
(this is essentially due to Villamayor papers \cite{Vil1} and \cite{Vil2} showing  the existence of 
resolutions compatible with smooth  morphisms).
It should be also mentioned that, in his recent generalization \cite{YasudaWild}, Yasuda 
allows us to get 
the desired claim directly via the reduction and normalization $\mf Z$ of $\mf Z'$, without
passing through any resolution. This happens because his new statements allows us to
extend the definition 
of $H^*_{\CR}(\ ;\CC)$
to singular or wild (in positive characteristic) 
Deligne--Mumford stacks. This allows us to
deduce the desired bigraded isomorphism between $H^*_{\CR}(\mathfrak X;\CC)$ and $H^*(\wt X;\CC)$
by identifying them directly with $H^*_{\CR}(\mathfrak Z';\CC)$, see \cite[Thm.~16.2]{YasudaWild}.\qed 

\medskip 
Let us now consider our setup. 
The existence of a crepant resolution is guaranteed
in dimension $2$ and $3$. In dimension $2$, the 
crepant resolution is canonical and coincides with 
the minimal resolution. In the present paper we also have an involution 
$$\sigma\colon \mf X\to \mf X$$
acting by change of sign on the determinant of the tangent bundle 
locally at any fixed point. 
Since the coarse space $X$ is 
the final object with respect to morphisms to algebraic spaces we 
have an involution of $X$, still denoted by $\sigma$. 
Since $\mf X$ is Gorenstein, 
$\omega_{\mf X}$ descends to $X$. Since $\wt X$ is the minimal resolution, 
$\sigma$ lifts to $\wt X$. Locally 
at a fixed point of $X$, $\sigma$ acts by change of sign 
on the determinant of the tangent space at each fixed point.
We can rephrase more explicitly this condition, at each fixed point $p\in\wt X$, $\sigma$ operates linearly on $T=T_p(\wt X)$: we can decompose $T$ into a $c$-dimensional subspace of anti-invariant vectors and an invariant subspace of codimension $c$
\begin{equation}\label{eq:decompTangent}
T=T^+\oplus T^-\cong \CC^{\dim(\wt X)-c}\oplus \CC^c.\end{equation} 
Since $\det(\sigma \colon T\to T)=-1$, as we assume that the dimension is $2$, we can write $\sigma$ locally as $\diag(-1,1)$
(\emph{i.e.} $c=1$). 
Furthermore, as $\sigma$ operates on $\mf X$ and $\wt X$ compatibly, 
we get 
\[
\xymatrix@R=.99pc{
&[\mf Z/\sigma]\ar[dr]\ar[dl] & \\
{[\mf{X}/\sigma]}\ar[dr]_{\mf{p}} & & {[\wt X/\sigma]}\ar[dl]^{\wt p}\\
& [X/\sigma] &
}
\]
The conditions of $K$-equivalence mentioned above are satisfied. Then, the above statement 
by Yasuda yields 
\[ H^{p,q}_{\CR}([\mf X/\sigma];\CC)\cong H^{p,q}_{\CR}([\wt X/\sigma];\CC),\]
for $p,q\in \frac12\ZZ$ (we refer to \cite[Cor.~4.8]{Yasuda}).
By unravelling the definition of $\sigma$-orbifolded cohomology 
and by restricting to the $(\frac12,\frac12)+\ZZ\times \ZZ$-graded part 
of the above isomorphism, we get 
an identification \begin{equation}\label{eq:sigmasigma}
H^{p,q}_{\sigma}(\mf X;\CC)\cong H^{p,q}_{\sigma}(\wt X;\CC)
\end{equation}
between the $\sigma$-orbifold cohomology 
of $\mf X$ and of $\wt X$.
Finally, since $\sigma$ acts as $\diag(-1,1)$ locally at each fixed point of $\wt X$, 
we get an isomorphism between the $(1/2)$-shifted
$\sigma$-orbifold cohomology of $\mf X$ 
and the ordinary cohomology of 
the fixed locus $\wt X_\sigma$ in $\wt X$. 
We get the following isomorphism 
of bigraded vector spaces.
\begin{pro}\label{pro:fixedlocusstated}
Let $\sigma\colon \mf X\to \mf X$
be an involution of a $2$-dimensional stack
 operating by a change of sign on the canonical bundle 
locally at each fixed point of $\mf X$. Then, we have the bigraded isomorphism
\begin{equation*}H^*_{\sigma}(\mathfrak X;\CC)\textstyle(\frac12)
\cong H^*(\wt X_\sigma;\CC),\end{equation*}
where $\wt X_\sigma$  is the fixed locus of the involution $\sigma$
induced on the minimal resolution $\wt X$ of the coarse space $X$ of $\mf X$.
\end{pro} 

Since we are focusing on the $2$-dimensional case and 
we are only concerned with the special case $\mf X=\Sigma_{W,G}$
with an involution of the form 
$\sigma=(\frac12,0,0,0)$, we can provide an explicit realisation of the  
isomorphism of Proposition \ref{pro:fixedlocusstated} without passing through Yasuda's statements
and by relying on the ADE-classification of singularities. We proceed as follows.

Since we assume that the dimension is $2$, and since the stabilizers are abelian 
and satisfy the Gorenstein condition (as a consequence of $\omega_{\mf X}\cong \Ocal_{\mf X}$), we get a local presentation of 
$\mf X$ of the form $[\CC^2/\pmmu_r]$ 
with $\zeta\in \pmmu_r$ acting as $\zeta\cdot(x,y)=(\zeta x,\zeta^{-1} y)$. This 
happens since we are in the (abelian) A-case of the  ADE-classification of 
Du Val singularities. 
The involution $\sigma$ not only operates directly on $Z(W)$ in $\CC^4\setminus 0$ as mentioned in \S\ref{subsect:applications}. Actually we can 
further argue that, locally at each
$\sigma$-fixed point,
its local picture is the same as that of an involution 
$\sigma\colon [\CC^2/\pmmu_r]\to [\CC^2/\pmmu_r]$ 
induced by a linear map $\CC^2\to \CC^2$ commuting with the action of $\pmmu_r$. 

Indeed, one can study $[(\CC^4\setminus \pmb 0)/\GG_m]$ explicitly via the 
open loci $x_j\neq 0$ 
 $$[\CC^3\times (\CC_{x_j}\setminus \pmb 0)/\GG_m]=[\CC^3/\pmmu_{w_j}]$$ 
with $\xi_{w_j}=\exp(2\pi i/w_j)$ acting diagonally as
$\diag(\xi_{w_j}^{w_i})_{i\neq j}$. 
The above identity follows from 
writing $\CC^3\times (\CC_{x_j}\setminus \pmb 0)$ as $\CC^3\times Y$
with $Y=\CC_{x_j}\setminus \pmb 0$ and 
$\GG_m$ operating on $\CC^3\times Y$ and on $Y$. 
Since the action is transitive on $Y$,
the stack $[\CC^3\times Y/\GG_m]$ equals $[\CC^3/S]$, where $S$ is the stabilizer of the action of 
$\GG_m$ on $Y$. 
We apply the same method to $G\GG_m$ and we present the open set 
$(x_j\neq 0)$ within 
$[(\CC^4\setminus \pmb 0)/G_m]$ as $[\CC^3/H]$, where $H$ is the finite stabilizer of the 
action of $G\GG_m$ on $\CC_{x_j}\setminus \pmb 0$. 
Then, we restrict the hypersurface $\Sigma_{W,G}$ and obtain a smooth hypersurface 
$\Sigma_{W,G}\times_{\PP(\pmb w)} [\CC^3/H]$ within $[\CC^3/H]$. 
Locally at a point $p$ of the hypersurface $Z(W)$, the hypersurface
can be identified to $[\CC^2/K]$, with the stabilizer $K$ of $H$ at $p$
acting linearly as a finite subgroup of $\GL(2;\CC)$. 
Indeed, since $\Sigma_{G,W}$ has trivial canonical bundle, $K$ 
belongs to $\SL(2;\CC)$. Since $K$ is abelian, after a change of basis, it is necessarily cyclic and operates 
as $(\frac1r,\frac{r-1}r)$. 
Finally, the same argument applies to $G(\sigma)\GG_m$ and yields a local 
presentation of the 
stack $[\Sigma_{W,G}/\sigma]$ as $[\CC^2/K']$ with $K'\subset \GL(2;\CC)$ containing a 
cyclic subgroup $K\subseteq \SL(2;\CC)$ of index $2$ so that $[\CC^2/K]$ is a local presentation of $\Sigma_{W,G}$.

We can now provide the statement which applies to $\Sigma_{W,G}$ 
and affirms the existence of an explicit 
isomorphism. 
This is an improvement on the more general
approach via Yasuda's result: indeed we get an explicit identification rather than
an identity between the dimensions of the vector spaces involved. 
We restate Proposition \ref{pro:fixedlocusstated}
in the setup where we can provide such  isomorphism. 
\begin{pro}\label{pro:crthm}
Consider a $2$-dimensional, smooth, proper, Gorenstein,
Deligne--Mumford stack $\mathfrak X$ 
with abelian stabilizers at each point. 
Let $\sigma\colon \mf X\to \mf X$ be an involution 
acting by change of sign on the determinant of the tangent space 
locally at any fixed point $x\in \mf X$; more precisely, we assume 
that the local picture of $\sigma$ and $\mf X$ at $x$ 
is the local picture of $\iota$ and $[\CC^2/G]$ at the origin, 
with $G\subset \SL(2;\CC)$ and $\iota\in \GL(2,\CC)$ of determinant $-1$ satisfying $\iota G\iota^{-1}=G$.

We consider the minimal resolution $\wt X$ 
of the coarse moduli space $X$, the induced involution, still denoted $\sigma$, and the fixed space $\wt X_\sigma$ in $\wt X$. 
Then, we have an explicit 
identification of bigraded vector spaces 
\begin{equation}\label{eq:fixedlocus}
H^*_{\sigma}(\mathfrak X;\CC)\textstyle(\frac12)\cong H^*(\wt X_\sigma;\CC).\end{equation}
\end{pro}
\begin{proof}The local picture at each point $p\in \mf X$ with non-trivial stabilizer is given by 
$$\mf U=[\Spec \CC[x,y]/\pmmu_{d_p}], \qquad \text{with $d_p\in \NN_{>1}$ 
and $\zeta\in \pmmu_{d_p}$ operating as $\diag (\zeta,\zeta^{-1})$}.$$
The stack $\mf X$ may be regarded as the union of 
the representable locus $\mf X^{\mathrm{rep}}$ and of 
the stacks $[\Spec \CC[x,y]/\pmmu_{d_p}]$ identified along 
$(\Spec \CC[x,y]\setminus \pmb 0)/\pmmu_{d_p}$
for every $p$ in the set $P$ of points with non-trivial stabilizer.
 \begin{lem}
 Let $\mf U =[\Spec \CC[x,y]/\pmmu_{d}]$ and let $\sigma$ be an involution of $\Spec \CC[x,y]$ descending to an involution of $\mf U$ and
operating by change of sign on $dx\wedge dy$. Then $\sigma$ 
is isomorphic to exactly one of the following cases:
\begin{enumerate}
\item[(a)] involutions $\sigma(x,y)=(-x,y)$ 
fixing each branch of the node $(xy=0)$ 
and acting trivially on at least one 
branch;
\item[(b)] involutions $\sigma(x,y)=(\la y,\la^{-1}x)$ (with $\la\neq 0$)
switching the two branches; 
\item[(c)] involutions $\sigma(x,y)=(\xi_{2d} x,\xi_{2d}^{d-1}y)$
mapping each branch to itself and fixing only one point.
\end{enumerate} 
We have an explicit isomorphism
$H^*_{\sigma}(\mathfrak U;\CC)\textstyle(\frac12)\cong H^*(\wt U_\sigma;\CC),$
where $\wt U$ is a crepant resolution of the coarse space $U$ of $\mf U$.
\end{lem}
\begin{proof}
Before proving the statement, let us 
point out that the three cases are disjoint. In case (c), for $d=2$, we have  $\sigma(x,y)=(\xi_4 x,\xi_4 y)$ which has determinant $-1$ as required. For $d=3$, we have 
$\sigma=\diag(\xi_6 , \xi_6^2)$ and
 \[\sigma \diag(\xi_3,\xi_3^{-1})=\diag(-1,1);\] we see that this is actually a case (a) example. More generally, it suffices in case (c) to consider
only $d \in 2 \ZZ$, as otherwise 
the natural transformation $\diag(\xi_d,\xi_d^{-1})^{(d-1)/2}$ identifies $\sigma$ to case (a).

We describe the $\sigma$-inertia stack 
$\mf I_{\mf U}^\sigma$ explicitly in each case.

\medskip 

\noindent Case (a). For $\sigma(x,y)=(-x,y)$, we get 
\begin{align}
\mf I^\sigma_{[\CC^2/\pmmu_{d}]}&=[I_{\pmmu_d\sigma}(\Spec \CC[x,y])/\pmmu_d]\nonumber\\
\nonumber&=\bigg [\big\{(\zeta\sigma,(x,y))\mid \zeta\in \pmmu_d, \ (-\zeta x,\zeta^{-1}y)=(x,y)\big\}\ /\ \pmmu_d\bigg ]\\ 
&=\begin{cases}
[\Spec \CC[y]/\pmmu_d]_0\ \sqcup \ \bigsqcup_{j=1}^{\frac{d}2 -1} (B\pmmu_{d})_j 
\ \sqcup\ 
[\Spec \CC[y]/\pmmu_d]_{\frac{d}2}\ \sqcup\ \bigsqcup_{j=\frac{d}2+1}^{d-1} (B\pmmu_{d})_j& \text{$d\in 2\ZZ$,}\\
[\Spec \CC[y]/\pmmu_d]_0\ \sqcup \ \bigsqcup_{j=1}^{d-1} (B\pmmu_{d})_j
& \text{else.}
\label{eq:onP}
\end{cases}\end{align}
The cohomology is shifted by 
the age function which is 
constantly equal to $\frac12$ on the
one-dimensional components.
The zero-dimensional components $(B\pmmu_d)_j$ corresponding to the index $j$ are 
\[ (B\pmmu_d)_j=[(\xi_d^j\sigma,\pmb 0)/\pmmu_d].\]
We notice that $\xi_d^{j}$ operates on $\CC^2$ 
as $\diag(-\xi_d^j, \xi_d^{-j})$ in 
\eqref{eq:onP}; 
the age is 
$1+\frac12$ for $i<d/2$ and $\frac12$ for $i>d/2$.
The exceptional divisor of the 
crepant resolution $\wt U$ of $U=\{uv=t^d\}$ consists of $d-1$ curves 
$E_1,\dots, E_{d-1}$ with $E_i$ intersecting $E_{i-1}$ and $E_{i+1}$ for $i\neq 1, d-1$ 
and $E_1$ (resp.~$E_{d-1}$) intersecting $E_2$ (resp.~$E_{d-2}$).
We refer to the proper transform of $(u=0)$ as $E_0$ and to the proper 
transform of $(v=0)$ as $E_{d}$; these are the coarse images of the branches $(x=0)$ and $y=0$ in $\mf U$. 
Since $(x,y)\in \mf U$ maps to $(x^d,y^d)\in U$ the branches $E_0$ and $E_d$ are fixed 
if $d$ is even, whereas, for odd $d$, only $E_0$ is fixed. Since $\wt U$ is the minimal resolution 
$\sigma$ lifts to $\wt U$ to a unique involution locally acting as $\diag(-1,1)$; we 
deduce that $E_{2i}$ are fixed with $0<i<d$. 
The claim follows since
the cohomology of a projective line 
is $V=\CC\oplus \CC(-1)$ and 
we have 
\[H^*_\sigma(\mf U;\CC)(\textstyle\frac12)=H^*(\mf I^\sigma_{\mf U};\CC){\textstyle(-\mf a+\frac12)}= \CC^2\ \oplus 
V^{\oplus\lfloor(d-1)/2\rfloor}\cong \bigoplus_{\substack{0\le j \le d \\{j\in 2\ZZ}}} H^*(E_j;\CC)=H^*(\wt U_\sigma;\CC).\]

\medskip 

\noindent Case (b). For $\sigma(x,y)=(\la y,\la^{-1}x)$, we get 
\begin{align}
\mf I^\sigma_{[\CC^2/\pmmu_{d}]}&=[I_{\pmmu_d\sigma}(\Spec \CC[x,y])/\pmmu_d]\nonumber\\
\nonumber&=\bigg [\big\{(\zeta\sigma,(x,y))\mid \zeta\in \pmmu_d, \ (\zeta\la y,\zeta^{-1}\la^{-1}x)=(x,y)\big\}\ /\ \pmmu_d\bigg ]\\
&=\begin{cases}
[\Spec (\CC[x,y]/(x=\la y))/\pmmu_2]_0\ \sqcup 
[\Spec (\CC[x,y]/(x=-\la y))/\pmmu_2]_1& \text{$d\in 2\ZZ$,}\\
\Spec (\CC[x,y]/(x=\la y))_0
& \text{else.}\end{cases}
\label{eq:onP2}
\end{align}
The labels $0$ and $1$ in the even case and the label $0$ in the odd case 
indicate that there are two or one conjugacy classes represented 
by $\sigma$ and (for even $d$) by $\diag(\xi_d,\xi_d^{-1})\sigma$; this happens 
because conjugating $\sigma$ by $\diag(\xi_d,\xi_d^{-1})$ yields 
$\diag(\xi_d^2,\xi_d^{-2})\sigma$.
The age is constantly $1/2$; therefore the claim boils down to 
the identity between the cohomology of the two lines (resp. one line) above 
and the cohomology of $\wt U_\sigma$. We notice that $\sigma$ exchanges $E_i$ and $E_{d-i}$ for $i=0,\dots, n$. It fixes two smooth points in $E_{{d}/2}$ or the node $E_{({d-1})/2}\cap E_{({d+1})/2}$, where the proper transform of the fixed locus in $\mf U\setminus \pmb 0$ meet the exceptional divisor. The fixed locus reduces to the proper 
transform of $(\mf U\setminus \pmb 0)_\sigma$; \emph{i.e.} two lines (resp.~one line) if $d$ is even (resp.~odd). The claim follows.

\medskip

\noindent Case (c). For $\sigma(x,y)=(\xi_{2d} x,\xi_{2d}^{d-1} y)$, we get 
\begin{align}
\nonumber\mf I^\sigma_{[\CC^2/\pmmu_{d}]}&=
\bigg [\big\{(\zeta\sigma,(x,y))\mid \zeta\in \pmmu_d, \ (\xi_{2d}\zeta x,\xi_{2d}^{d-1}\zeta^{-1}y)=(x,y)\big\}\ /\ \pmmu_d\bigg ]\\&=\bigsqcup_{j=0}^{d-1} (B\pmmu_d)_j.
\label{eq:onP3}
\end{align}
The zero-dimensional components $(B\pmmu_d)_j$ corresponding to the index $j$ are 
$[(\xi_d^j\sigma,\pmb 0)/\pmmu_d]$, where $\xi_d^{j}$ operates on $\CC^2$ 
as $\diag(\xi_{2d}^{2j+1}, \xi_{2d}^{3d-2j+1})$;
the age is 
$1+\frac12$ for $d/2\le j<d$ and $\frac12$ for $j<d/2$.

Using the same notation as in Case (a), the automorphism $\sigma$ fixes every second exceptional curve $E_i$, and neither of $E_0$ or $E_d$, so that it fixes precisely $E_j$ for $0<j<d$ and $j \in 2\ZZ+1$.
We have 
\[H^*_\sigma(\mf U;\CC)(\textstyle\frac12)=H^*(\mf I^\sigma_{\mf U};\CC){\textstyle(-\mf a+\frac12)}=
V^{\oplus d/2}\cong \bigoplus_{\substack{0< j <d \\{j\in 2\ZZ+1}}} H^*(E_j;\CC)=H^*(\wt U_\sigma;\CC).\]

\end{proof}

The $\sigma$-inertia stack of the 
representable stack $\mf X^{\mathrm{rep}}$ is simply a smooth curve
\begin{equation}\label{eq:offP}
\mf I_{\mf X^{\mathrm{rep}}}^\sigma=(\mf X^{\mathrm{rep}})_\sigma=(X\setminus P)_\sigma=X_\sigma\setminus P=\wt X_\sigma\setminus P\end{equation}
whose proper transform within $\wt X$ coincides with the coarse space of the $1$-dimensional part of the $\sigma$-inertia stack. This identification and the 
above lemma complete the proof. The $\frac12$-shift is 
due to the constant value $\frac12$ of the 
age on the $1$-dimensional components of the inertia stack. 
\end{proof}
The argument via Yasuda's theorem generalizes in any dimension
under the conditions that 
\begin{enumerate}
\item a crepant resolution $\wt X$ of the coarse space $X$
exists, 
\item 
that the induced involution $\sigma\colon X\to X$ lifts to 
$\wt X$, and
\item that the fixed locus is a divisor in $\wt X$. \end{enumerate}
Condition $(1)$ holds in dimension $3$ (and often fails in higher dimension). 
Condition $(2)$ needs to be checked explicitly; we point out 
that in the cases $\mf X=\Sigma_{W,G}$ the situation is further 
simplified by the fact that the involution operates on 
the weighted projective space as $(\frac12,0,\dots,0)$; however this does not 
allow to express $\sigma$ as $(\frac12,0,\dots,0)$ in general.
Condition $(3)$ is only used to deduce 
\eqref{eq:fixedlocus} from \eqref{eq:sigmasigma}. 

Without 
condition $(3)$, the involution $\sigma$ locally acts as 
$-\id_{\CC^c}\oplus \id_{\CC^{n-c}}$ at each fixed point of $\wt X$ (see \eqref{eq:decompTangent}). 
Therefore, we can decompose 
$\wt X_\sigma$ into the disjoint union of smooth open and closed 
subvarieties of odd codimension $\wt X_\sigma^1, \dots ,\wt X_\sigma^{1+2\lfloor d/2\rfloor}$
$$\wt X_\sigma=\bigsqcup_{\substack{ c\in 1+2\ZZ\\ c\le \dim(X)}}\wt X_\sigma^c,$$
where $\wt X_\sigma^c$ is a smooth subvariety of $\wt X$ of codimension $c$. The same argument used to prove Proposition \ref{pro:fixedlocusstated} implies the following statement where we assume conditions $(1)$ and $(2)$ above and modify the claim as follows.
\begin{pro}\label{pro:alldim}
Let $\mathfrak X$ be a smooth, proper, Gorenstein,
Deligne--Mumford stack
with trivial stabilizer on the 
generic point. 
Let $\sigma\colon \mf X\to \mf X$ be an involution
acting by change of sign on the determinant of the tangent space 
locally at any fixed point. 
 If $X$ admits a crepant resolution $\wt X\to X$, and the involution induced 
by $\sigma$ on $X$ admits a lift to $\wt X$, then we have an isomorphism of bigraded vector spaces 
$$H^*_{\sigma}(\mathfrak X;\CC)\textstyle(-\frac12)\cong
\bigoplus_{c} H^*(\wt X_\sigma^c;\CC)(-c).$$
\end{pro}

\section{Landau--Ginzburg models} \label{sect:LG-models}
Orbifold cohomology yields a bigraded vector space attached to an invertible polynomial~$W$, a
 subgroup $G\ni j_W$ of $\Aut_W$, and --- in the 
 generalized version presented here --- an automorphism.
In this section, we define another bigraded vector space associated to the non-degenerate polynomial $W$ and its symmetries: the Landau--Ginzburg model. We then discuss results from the literature relating these two state spaces, as well as mirror symmetry for LG models.

\subsection{LG state space}\label{sect:statespaces}
For any degree-$d$ quasi-homogeneous 
non-degenerate polynomial $W$ in the 
variables $x_1,\dots, x_n$ of weights $w_1,\dots,w_n$
(regardless of any Calabi--Yau condition 
on the sum of the weights or even any 
invertibility condition on the defining 
matrix), we consider the (full) 
state space of the Landau--Ginzburg model $W\colon \CC^n\to \CC$
\[
	\mathcal H^{*,*}(W):=\bigoplus_{g\in \Aut_W} \Jac(W_g),
\]
where each summand is given by the Jacobi ring 
\[
\Jac(W_g)=\bigg[\CC[x_j\mid j\in F_g]/(\partial_j W_g\mid j\in F_g)
\bigg]\ \textstyle{\bigwedge_{j\in F_g} dx_j},
\]
and where $\partial_j$ stands for $\partial/\partial x_j$ (and we refer to \S\ref{sect:notnfiniteorderaction} for $F_g$ and $W_g$).
 The following definition of degree of a monomial allows us
to make the Jacobi ring 
into a graded algebra as follows.
We will use the notation $[g,\phi]$ for an element 
in the image of $\Jac(W_g)\hookrightarrow \Hcal(W)$, where $\phi = \prod_{j\in F_g} x_j^{m_j} \bigwedge_{j\in F_g} dx_j$ is a monomial term of degree 
\[ 
	\deg(\phi):=\frac1d\sum_j(m_j+1){w_j}.
\]
The bigrading $(p,q)$ of an element $[g,\phi] \in \Jac(W_g)$ is given by
\begin{equation}\label{eq:bigrading}
	(p,q):=\left(\#F_g-\deg(\phi)+\mrage(g), \deg(\phi)+\mrage(g)\right),
\end{equation}
Since $\deg(\partial_j W_g \bigwedge_{j\in F_g} dx_j)=1$ for all $g$ and for all $j$, 
the Jacobi ideal is a homogeneous ideal 
and the grading descends
to the Jacobi ring.

In this paper, we regard $\Hcal(W)$ as a bigraded vector space and
we never use its ring structure (\emph{e.g.} \cite{FJR1}).
In the notation of \eqref{eq:Reidnotn}, any diagonal symmetry $\alpha = (a_1,\ldots,a_n) \in \Aut_W$ 
acts on $[g,\phi]$ as
$\al^*[g,\phi]=\chi_\al [g,\phi]$, where 
$\chi_\al $
is
 the multiplication by $\exp(2\pi i t_{(a_1,\ldots,a_n)}) $ with $t_{(a_1,\ldots,a_n)}$ 
 given by 
\[
	t_{(a_1,\ldots,a_n)}=\sum_j(m_j+1) a_j\in \QQ/\ZZ \qquad \text{(and $\phi = \prod_{j\in F_g} x_j^{m_j} \bigwedge_{j\in F_g} dx_j$)}.
\]
Note that the action of the grading element $j_W$ is actually given by the character~$\deg$ mod $\ZZ$.

The space $\Hcal(W)$ is also referred to as ``unprojected'' state space in the literature
(see~\cite{Krawitz}). This is because,
when studied with respect to a group action,
it can be projected onto several 
subspaces of invariant elements playing a role in the theory of Landau--Ginzburg models.
We treat group actions and invariant 
subspaces systematically here. 

Given a subgroup $K$ of $\Aut_W$, and a subset $S\subseteq\Aut_W$, we define 
\begin{equation}\label{eq:unproj}
	\mathcal H^{*,*}_S(W)^K := \bigoplus_{g\in S} \Jac(W_g)^K.
\end{equation}

\begin{rem}[FJRW state space] In the special case where $S=K$, we recover the definition of the Fan--Jarvis--Ruan--Witten state space of a Landau--Ginzburg model $W\colon[\CC^r/S]\to \CC$. 
For any invertible polynomial $W$ and any subgroup $K$ of $\Aut_W$, via a Tate twist by $q=\sum_j q_j=\sum_j w_j/d$, we
have \[\mathcal H_{\FJR}(W,K)=\Hcal_K(W)^K{\textstyle(q)}.\]
If $W$ is a Calabi--Yau invertible polynomial, then
the charges add up to $1$ and we have
\[
	\mathcal H_{\FJR}^{p,q}(W,K)=\Hcal^{p+1,q+1} _K(W)^K.
\]
\end{rem}

\subsection{Dual polynomials and groups}\label{sect:dualpolys}
We now consider an invertible Landau--Ginzburg potential 
as in \eqref{eq:invertibleW}. We still avoid imposing 
any Calabi--Yau condition on the sum of weights and the degree.

Let $W$ be a non-degenerate Landau--Ginzburg potential in
$n$ variables, whose exponent matrix is the invertible matrix $M=(m_{i,j}).$ The dual polynomial, denoted $W^\vee$, is defined as the Landau--Ginzburg potential whose exponent matrix is given by $M^{\mathrm T}$.
Following \eqref{eq:Reidnotn}, 
the columns of $M^{-1}=(m^{i,j})$ are generators of 
$\Aut_W$ and the rows of $M^{-1}$ are generators of 
$\Aut_{W^\vee}$. 

\begin{rem}\label{rem:canonicalident}
We recall that, by Remark \ref{rem:lattices}, the Cartier duality $H^*=\Hom(H;\GG_m)$ induces a canonical isomorphism (see \cite{Berglund-Henningson,GZE_Saito, Gusein-Zade-Ebeling})
\[
	({\Aut}_W)^* \cong \Aut_{W^\vee}.
\]
For any subgroup $G$ of $\Aut_W$, the Berglund--Hübsch dual group to $G$ is \[ G^\vee=\ker ({i}^*\colon \Aut_{W^\vee}\to {G}^*). \] 
The duality reverses the inclusions and transforms into each other two distinguished groups: $J_W:=\langle j_W\rangle$ into $SL_{W^\vee}:=\Aut_W^\vee\cap \SL_n(\CC)$ and $SL_{W}$ into $J_{W^\vee}$.
\end{rem}

The following theorem is proven in various versions in 
\cite{Kre, Krawitz, BorisovBH}. 
We provide the statement of 
\cite{Kre} and \cite{Krawitz}. To each monomial
$x_1^{a_1}\cdots x_r^{a_r}$, we attach a diagonal symmetry 
as follows:
\[
	\gamma\colon x_1^{a_1}\cdots x_r^{a_r}\mapsto 
\textstyle{ \prod_{j\in F_g} 
(m^{j,1},\dots, m^{j,n})^{a_j}},
\]
where $m^{i,j}$ are the coefficients of the inverse 
of the exponent matrix of $W$ (refer to notation~\eqref{eq:Reidnotn}). 
The right hand side lies in $\Aut_{W^\vee}$, because the lines 
of the inverse matrix $M$ span $\Aut_{W^\vee}$.
With a slight abuse of notation identifying 
the form $\prod_{j\in F_g}x_j^{a_j-1}\bigwedge_{j\in F_g} d x_j$
to the monomial $\prod_{j\in F_g} x_j^{a_j}$,
we can apply $\gamma$ to each summand of $\Hcal(W)$:
\[
	\gamma\colon \Jac(W_g)\to \Aut_{W^\vee}.
\]

\begin{rem}\label{rem:Krawitzduality}
In particular, $\gamma$ provides an 
equivalent interpretation of the dual group 
$G^\vee$ attached to any 
subgroup $ G$ of $\Aut_W$. We have 
\[
	\ker ({i}^*\colon \Aut_{W^\vee}\to {G}^*) = \gamma(\{G\text{-invariant monomials}\}),
\]
where the right hand side is actually Krawitz's 
original formalization of 
the standard Berglund--Hübsch duality.\end{rem}

\begin{thm}[Unprojected Landau--Ginzburg mirror symmetry, \cite{Krawitz}, \cite{BorisovBH}]\label{thm:Kra}
We have an isomorphism 
\begin{align*}
\Gamma\colon \Hcal^{p,q}(W)&\to \Hcal^{n-p,q}(W^\vee).
\end{align*}
The isomorphism attaches to each element of the form 
\[
	[h,\phi=\textstyle{\prod_{j\in F_h}x_j^{a_j-1}
\bigwedge_{j\in F_h} x_j}],
\]
a unique element of the form 
$[\gamma(\phi), T]$ with $T\in \Jac(W^\vee_{\gamma(\phi)})\cap \gamma^{-1}\{h\}$.\qed
\end{thm}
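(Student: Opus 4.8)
The plan is to define $\Gamma$ on a monomial basis and to recognise it, together with the analogous map for $W^\star$, as mutually inverse, grading-matching bijections of bases. First I would fix a basis of $\Hcal(W)$. Because $W$ is invertible, for every $h\in\Aut(W)$ the restriction $W_h$ of $W$ to the coordinate subspace $\{x_j\mid j\in F_h\}$ is again a non-degenerate invertible polynomial in the variables indexed by $F_h$: only the monomials of $W$ supported on $F_h$ survive, and they form a square invertible exponent matrix. Hence each $\Jac(W_h)$ has an explicit finite monomial basis, and summing over $h$ produces the basis of $\Hcal(W)$ formed by the elements $[h,\phi]$ with $\phi=\prod_{j\in F_h}x_j^{a_j-1}\bigwedge_{j\in F_h}x_j$ that appear in the statement.

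On such a basis element I set $\Gamma[h,\phi]=[\gamma(\phi),\psi]$. The first slot is forced: $\gamma(\phi)$ lies in $\Aut(W^\star)$ since the rows of $M^{-1}$ generate $\Aut(W^\star)$. The second slot $\psi$ is the monomial of $\Jac(W^\star_{\gamma(\phi)})$ characterised by $\gamma^\star(\psi)=h$, where $\gamma^\star$ is built from $W^\star$ exactly as $\gamma$ is from $W$ and lands in $\Aut((W^\star)^\star)=\Aut(W)$. The virtue of this symmetric description is that $\Gamma$ and its $W^\star$-counterpart $\Gamma^\star$ are manifestly inverse on basis indices: $\Gamma$ reads the target group element off $\phi$ and the target monomial off $h$, and $\Gamma^\star$ undoes exactly this. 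Thus bijectivity reduces to a single well-definedness claim: for each $[h,\phi]$ there is precisely one basis monomial $\psi$ of $\Jac(W^\star_{\gamma(\phi)})$ in the fibre $\gamma^{-1}\{h\}$.

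Two numerical identities then account for the bigrading. Writing $g'=\gamma(\phi)$ and using \eqref{eq:bigrading}, the equality $q^\star=q$ amounts to $\deg(\psi)+\age(g')=\deg(\phi)+\age(h)$, while $p^\star=n-p$ amounts to $\#F_{g'}+2\age(g')=n-\#F_h+2\deg(\phi)$; the structural input here is the complementary-age relation $\age(g)+\age(g^{-1})=n-\#F_g$ together with the compatibility $\age(\gamma(\phi))\equiv\deg(\phi)$ between the age of the dual symmetry and the degree of the monomial.

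Finally, both the uniqueness of $\psi$ and these two identities I would reduce to the three atomic types via Kreuzer--Skarke. Every non-degenerate invertible polynomial is, after reordering variables, a Thom--Sebastiani sum of Fermat, loop and chain blocks; since $\Aut$, the fixed sets $F$, the invariants $\age$ and $\deg$, and the operation $W\mapsto W^\star$ are all compatible with such sums, one has $\Hcal(W_1\oplus W_2)=\Hcal(W_1)\otimes\Hcal(W_2)$ with additive bigrading, $\gamma$ is block-diagonal, and $\Gamma$ is the tensor product of the factor maps. It therefore suffices to check each atomic type in isolation. The Fermat case $x^a$ is immediate: the untwisted-sector monomials $x^{b}\,dx$ and the units of the twisted sectors are interchanged, and the two identities are a one-line verification. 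The hard part will be the loop and chain blocks, where the Milnor-ring bases are no longer diagonal and the integer corrections hidden in $\age(\gamma(\phi))\equiv\deg(\phi)$ must be tracked explicitly; pinning down existence-and-uniqueness of $\psi$ and the exact grading identities in those two cases is the genuine combinatorial core of the argument.
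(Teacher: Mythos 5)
The first thing to observe is that the paper does not prove this statement at all: it is quoted as Krawitz's theorem from \cite{Kr} (hence the \verb|\qed| with no proof), accompanied only by a remark explaining why Krawitz's proof applies to the polynomial $x_0^2+W$ even though the variable $x_0$ has weight exactly $\frac12$. So your proposal has to be measured against Krawitz's published argument, and in outline you have reconstructed it correctly: the map is defined on monomial bases by exchanging monomial exponents with group-element exponents written in the canonical generators coming from the rows and columns of $M^{-1}$; injectivity/surjectivity and the bigrading identities are then checked after a Kreuzer--Skarke/Thom--Sebastiani reduction to Fermat, loop and chain blocks, using that $\Aut$, the fixed sets $F_g$, $\age$, $\deg$, the duality $W\mapsto W^\star$ and the bigraded state space are all multiplicative under sums of polynomials in disjoint variables. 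Your two numerical reformulations of $q^\star=q$ and $p^\star=n-p$, and the complementary-age identity $\age(g)+\age(g^{-1})=n-\#F_g$, are also correct.

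The genuine gap is that everything you defer --- existence and uniqueness of $\psi\in\Jac(W^\star_{\gamma(\phi)})\cap\gamma^{-1}\{h\}$ and the grading identities for loop and chain blocks --- is not a routine verification but the entire content of Krawitz's paper, and in the chain case it is not merely laborious: the known argument does not go through in the generality you assume. For chain blocks the Milnor-ring bases contain exceptional monomials (exponents reaching $a_i$), the naive exponent swap does not always land on a basis element of the dual side, and Krawitz's treatment requires the relevant weights to be $<\frac12$. This is exactly what the remark following Theorem~\ref{thm:Kra} in the paper is about: the theorem is invoked for $V=x_0^2+W$, where the unique weight-$\frac12$ variable sits in a Fermat summand, so the restriction ``weights $<\frac12$ in the chain case'' is satisfied in the application even though it is invisible in the bare statement. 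Your plan, which promises to close the chain case by ``tracking the integer corrections'' with no hypothesis on the weights, would run into this obstruction; to complete the proof you must either import the weight hypothesis (and observe that it holds where the paper uses the theorem), or pass to a genuinely different method such as Borisov's vertex-algebra proof in \cite{BorisovBH}, which is organised around entirely different structures.
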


\begin{rem}\label{rem:mirrorswitch} Let $W=x_0^2+f(x_1,\dots,x_n)$. Then 
$\Aut_W=(\sigma)\times \Aut_f$, where $\sigma=(\frac12,0,\dots,0)$. Notice that $[g,\phi]$ is $\sigma$-invariant if and only if $g\not\in \Aut_f$. Furthermore, $g\in \Aut_f\subset \Aut_W$ implies $\gamma(\phi)\not \in \Aut_f$. We conclude that $\Gamma$ exchanges $\sigma$-invariant and $\sigma$-anti-invariant terms. 
\end{rem}

\begin{cor}\label{cor:Kra}
For any $S,K\subseteq \Aut_W$, 
consider the Berglund--Hübsch dual groups $H^\vee, K^\vee\subseteq \Aut_{W^\vee}$;
then, $\Gamma$ yields an isomorphism 
\[
	\Gamma\colon \Hcal^{p,q}_{S}(W)^{K}\to 
\Hcal_{K^\vee }^{n-p,q}(W^\vee )^{S^\vee}.
\]
\end{cor}
\begin{proof}
We only need to show that the image of 
$\Hcal_{S}(W)^{K}$ via $\Gamma$ is contained in 
$\Hcal_{K^\vee}(W^\vee)^{S^\vee}$. Then, the same claim holds in the opposite sense and
we conclude by Theorem \ref{thm:Kra}.

Given $[h,\phi]\in \Hcal_{S}(W)^{K}$, we need to prove that
the image $[\gamma(\phi),T]$ lies in 
$\Hcal_{K^\vee}(W^\vee)^{S^\vee}$.
First,
$\gamma(\phi)$ lies in $K^\vee$, 
because, by Remark \ref{rem:Krawitzduality} 
we have $\ga(\Jac(W_h))\subseteq K^\vee$.
Second, the form $T$ is $S^\vee$-invariant. Indeed, 
this amounts to proving that $T$ is 
invariant with respect to any symmetry of 
the form $\gamma(M)$ for any $S$-invariant 
monomial $M$. The last claim 
is equivalent to showing that $\gamma^\vee(T)$ fixes any $S$-invariant 
monomial 
$M$; this is the case because we have $\gamma^\vee(T)=h$ and $h\in S$. 
\end{proof}

\subsection{LG/CY correspondence} \label{sec:LGCY}
We slightly generalize
the Landau--Ginzburg/Calabi--Yau correspondence of \cite{ChRu} to 
$\sigma$-orbifold cohomology and 
to the state spaces $\Hcal_S(W)^K$ above.
Although in this paper we only apply this theorem to invertible polynomials,
 we do not need any invertibility condition on the polynomial here.
On the other hand, it is essential to 
require that all groups of symmetries involved 
in the statement contain $j_W$. 
\begin{thm}[Chiodo--Ruan \cite{ChRu}]\label{thm:LGCY}
Let $W$ be any non-degenerate polynomial
of weights $w_1,\dots, w_r$ and degree $d=w_1+\dots+w_r$ (Calabi--Yau condition). 
Let $G$ be a group of diagonal symmetries
containing $j_W$. Consider any automorphism $\varepsilon\in \Aut_W$ and the induced 
automorphism $\varepsilon\colon \Sigma_{W,G}\to \Sigma_{W,G}$. 
Then, for any $p$ and $q\in \QQ$, we have 
\[
	H_{\varepsilon}^{p,q}(\Sigma_{W,G};\CC)
	\cong \Hcal^{p,q}_{G\varepsilon}(W)^G(1),
\]
where 
the isomorphism is compatible with any finite-order 
diagonal symmetry 
acting on $\CC^r$ and preserving $W$.
\qed\end{thm}
\begin{proof} 
We argue as in \cite{ChRu}, where first the case where $G=J_W$ is shown: an 
explicit bidegree preserving isomorphism 
\[
	H_{\CR}(Z_{\PP(\pmb w)}(W);\CC)\cong \Hcal_{J_W}(W)^{J_W}(1)
\]
is given. 
We spell out the definitions of the two sides of the equation above, by writing 
\[
\bigoplus_{s\in \GG_m} H^*(Z(W_s)/\GG_m;\CC)(-\age)\cong
\bigoplus_{g\in J_W} \Jac(W_g)^{J_W}.
\]
We notice that the statement proven in \cite{ChRu}
is actually a bidegree-preserving automorphism 
\[
\bigoplus_{s\in \gamma \GG_m} H^*(Z(W_s)/\GG_m;\CC)(-\age)
 \cong\bigoplus_{g\in \gamma J_W} \Jac(W_g)^{J_W}.
\]
for any $\gamma\in \Aut_W$ (the proof proceeds $J_W$-coset by $J_W$-coset). 
In this way 
we have 
\[
H_{\sigma}^{p,q}(Z_{\PP(\pmb w)}(W);\CC) \cong \Hcal_{J_W\sigma}(W)^{J_W}(1).
\]
Finally, by summing over all cosets of $J_W$ in $G$ and by taking invariants with respect to $G$ 
on both sides, we get the desired claim.
\end{proof}

\begin{rem}
Note that for $\epsilon=\id$, the theorem above identifies Chen--Ruan cohomology and FJRW state spaces. 
Then, by combining it with 
with the statement of
Berglund--Hübsch 
mirror symmetry for Calabi--Yau invertible polynomials, we get 
the following claim from \cite{ChRu}. For any 
group $G$ containing $j_W$ and included in $\SL_W$,
we have 
\begin{multline*} H_{\CR}^{p,q}(\Sigma_{W,G};\CC)\cong\Hcal_{\FJR}^{p,q}(W, G)\cong 
\Hcal^{p+1,q+1}_{G}(W)^{G}\\\cong 
H^{n-p-1,q+1}_{G^\vee}(W^\vee)^{G^\vee}=
H^{n-2-p,q}_{G^\vee}(W^\vee)^{G^\vee}(1)\\\cong 
\Hcal_{\FJR}^{n-2-p,q}(W^\vee, G^\vee)
\cong H_{\CR}^{n-2-p,q}(\Sigma_{W^\vee,G^\vee};\CC).\end{multline*}
\end{rem}

\section{Mirror symmetry with an involution} \label{sect:involutions}

The classical Borcea--Voisin construction involves K3 surfaces with an involution. The generalization that we consider here is higher dimensional Calabi--Yau hypersurfaces within weighted projective spaces equipped with an involution. 

\subsection{Polynomials with an involution}
We will consider invertible Calabi--Yau polynomials of the form
\[ W(x_0,x_1,x_2,\dots,x_n)=x_0^2+f(x_1,\dots,x_n), \]
for some invertible polynomial $f$.
Consider $\sigma_W=(\frac12,0,\dots,0) \in \Aut_W$; we will usually write $\sigma$ omitting the subscript $W$ when 
no ambiguity may arise. Note that we can view $\Aut_f$ as a subgroup of $\Aut_W;$ in particular, $j_f \in \Aut_W$ and $\SL_f$ is contained in $\SL_W\subset\Aut_W$.

We also consider a group $H$ such that 
\[j_W \in H \subseteq \SL_W.\] 
Consider the surjective map 
$H\to \ZZ/2$, defined as the restriction to 
the $x_0$-line; the exact sequence \[0\to \ol H\to H\to \ZZ/2\to 0\]
splits; we have $(\sigma)\times \ol H=H$ with $\ol H\subseteq \Aut_f$. We have 
$H=\ol H[j_W]$ and $j_W=\sigma j_f$. 
The condition $j_W \in H \subset \SL_W$ implies 
$j_f^2 \in \ol H \subset \SL_f.$ 

Before studying the mirror dual of $W$ and $H$, let us consider the Landau--Ginzburg state space 
$\Hcal_{H\sigma}(W)^H$ in the light of the Landau--Ginzburg/Calabi--Yau theorem \ref{thm:LGCY}, which
matches 
$\Hcal_{H\sigma}(W)^H$ with $H_\sigma^*(\Sigma_{W,H};\CC)$ when
 $W$ is of Calabi--Yau type. 
Therefore, each element of 
$\Hcal_{H\sigma}(W)^H$ is $\sigma$-invariant.
In the special case where $\Sigma_{W,H}$ is $2$-dimensional, this can be 
observed scheme-theoretically: by Proposition \ref{pro:crthm}, 
we have an isomorphism between 
$\Hcal_{H\sigma}(W)^H$
and the $\sigma$-fixed locus of the resolution of 
$\Sigma_{W,H}$, whose cohomology is obviously fixed by $\sigma$. 
However, the following proposition shows that the $\sigma$ invariance of $\Hcal_{H\sigma}(W)^H$ can be seen purely on the LG side and therefore is true even without the CY condition. 
\begin{pro}\label{pro:noantiinvoninv} For any $W=x_0^2+f(x_1,\dots, x_n)$, $j_W\in H\subseteq \SL_W$ and $\sigma=(\frac12,0,\dots,0)$,
the involution $\sigma$ acts trivially on $\Hcal_{H\sigma}(W)^H$.
\end{pro}
\begin{proof}
We prove that the $\sigma$-anti-invariant part
$\Hcal_{H\sigma}(W)^{H}_{-}$ of 
$\Hcal_{H\sigma}(W)^H$ vanishes. 
Let us first consider the $\ol H$-invariant part 
\[
	\Hcal_{H\sigma}(W)^{\ol H}_{-}=\Hcal_{\ol H j_f }(W)^{\ol H} \cong \Hcal_{\ol H j_f }(f)^{\ol H}.
\]
The first identity is due to the fact that 
a $\sigma$-anti-invariant element is necessarily of the form $[h\sigma,dx_0\wedge \phi]$ with 
$h\sigma \in \ol Hj_f$). The second isomorphism maps
$[gj_f\in \ol Hj_f , dx_0 \wedge \phi]$ 
to $[gj_f ,\phi]$.
Now let us assume that $[g j_f \in \ol Hj_f, dx_0 \wedge \phi]\in 
\Hcal_{\ol H j_f }(W)^{\ol H}$ is a non-zero 
 $j_W$-invariant element (\emph{i.e.} it lies in 
 $\Hcal_{H\sigma}(W)^H$). We get a contradiction. 

First, write $g\in \ol H$ as $(p_1,\dots,p_n)\in \Aut_f.$ Then $g j_f=(w_1/d+p_1,\dots,w_n/d+p_n)$ and 
the set $I=F_{g j_f}$ of the indices of fixed coordinates is $I=\{i | w_i/d+p_i \in \ZZ\}$. 
We can write $\phi$ as 
\[\prod_{i \in I} x_i^{a_i-1} \bigwedge_i dx_i.\]
The form $\phi$ is $\ol H$ invariant so
$\sum_{i \in I} p_i a_i \in \ZZ,$
which implies
$\sum_{i \in I} -a_i w_i/d \in \ZZ.$
But this contradicts the assumption that 
$dx_0 \wedge f$ is $j_W$-invariant, which implies 
$	\sum_{i \in I} a_i w_i/d \in \frac{1}{2}+\ZZ.$
\end{proof}

\subsection{Mirror duality of CY-polynomials with involution}
The dual polynomial $W^\vee$ is also 
of the form $W^\vee=x_1^2+f^\vee$ and possesses a symmetry $\sigma_{W^\vee}$; by abuse of notation we
refer to $\sigma_{W^\vee}$ as $\sigma$. 
As shown above, the group $H^\vee$ is included in $\SL_{W^\vee}$ and contains $j_{W^\vee}$. We have 
$j_{W^\vee}^2\in \ol{H^\vee}\subset \SL_{W^\vee}$.

\begin{pro}\label{pro:propoertiesofstar} 
For any $H$ satisfying $j_W\in H\subseteq \SL_W$, we have $j_{W}^2\in \ol H\subseteq \SL_{W}$ and 
$j_{W^\vee}^2\in \ol{ H^\vee} \subseteq \SL_{W^\vee}$.\end{pro}
\begin{proof}
Under the canonical identification of 
Remark \ref{rem:canonicalident}, 
requiring that a character of $\Aut_{W^\vee}$
 vanishes on $j_W$ is equivalent to imposing 
the condition $\det=1$ within the group of 
diagonal symmetries of $W^\vee$. 
Therefore, if $i$ is the inclusion $H\into \Aut_W$ 
and $\bar \imath$ is the inclusion $\ol H\into \Aut_W$, we have 
\[
	\ker {i^*}=\ker (\bar {\imath}^*) 
	\cap \SL_{W^\vee}.
\]
The condition $j_{W^\vee}^2 \in \ol{ H^\vee }$ is satisfied because 
$\ker i^*=H^\vee$ contains $j_{W^\vee}$ and
has index two in $\ker (\ol \imath^*)$, because $\ol H$ has index two in $H$.\end{proof}

\begin{pro}\label{pro:BHdualities}
In the above setup, the 
inclusion-reversion operation 
$\vee$
exchanges the following two diagrams 
\[
	\xymatrix@C=.99pc{
 & \ol H\ar[dl]_{i_1}\ar[dr]^{i_3}\ar[d] ^{i_2}&&& & & & \ol{H^\vee}[\sigma ,j_{f^\vee}] & \\
\ol H[\sigma]\ar[dr]_{i_4} & H=\ol H[\sigma j_f] \ar[d]^{i_5} & \ol H[j_f]\ar[dl]^{i_6} &&\overset{\vee}\rightsquigarrow& & 
 \ol{H^\vee}[j_{f^\vee}]\ar[ur]^{i_1^\vee} & H^\vee
 =\ol{H^\vee}[\sigma j_{f^\vee}] \ar[u]_{i_2^\vee} & \ol{H^\vee} [\sigma ]\ar[ul]_{i_3^\vee} \\
 & \ol H[\sigma, j_f] && && & & \ol{H^\vee} \ar[ul]^{i_4^\vee}\ar[ur]_{i_6^\vee}\ar[u]_{i_5^\vee} &, \\
}
\]
where all the arrows are injective homomorphisms (on both sides, the groups of the form $\ol H[\sigma,j_f]$ may be 
regarded as $H[\sigma]=H[j_f]$).
\end{pro}
\begin{proof}
Indeed, $\sigma j_f$ and $\sigma j_{f^\vee}$ are 
the grading elements of $x_1^2+f$ and $x_1^2+f^\vee$. Therefore
$H[\sigma j_f]$ 
is dual to $\ol{ H^\vee} [\sigma j_{f^\vee}]$. This explains the middle lines of the above transformations (the inclusions 
are reversed due to Proposition \ref{pro:propoertiesofstar}).
Finally, $\ol H[\sigma]$ is a direct product of $(\sigma)$ and $\ol H$ (automorphism 
groups of summands involving disjoint sets of variables). 
Therefore, 
Berglund--Hübsch duality $\vee$ yields the direct product of the dual of $\sigma$ 
within $\Aut(x_0^2)$, which is trivial, 
with the direct product of the Berglund--Hübsch 
dual of $\ol H$ in $\Aut_f$, which is $\ol{H^\vee}[ j_{f^\vee}]$.
\end{proof}

\subsection{The geometric mirror symmetry theorem}\label{subsect:geomMS}
The proof of the main mirror symmetry statement follows naturally from
the previous setup, the LG/CY correspondence and mirror symmetry on the LG side. 

Consider a pair $(W=x_0^2+f,H)$ as above, 
with $W$ of CY-type, non-degenerate and invertible 
and $H\subseteq \SL_W$ and containing $j_W$. 
We realize that the 
state space $\Hcal_{H[\sigma]}(W)^H$ attached to the three data $W, H$, and $\sigma$ decomposes into all 
the relevant cohomological data. Indeed, we have
\begin{equation}\label{eq:LGCYdouble}\Hcal^{p,q}_{H[\sigma]}(W)^H=\Hcal^{p,q}_{H}(W)^H
\oplus \Hcal^{p,q}_{H\sigma}(W)^H=H^{p,q}_{\CR}(\Sigma_{W,G};\CC)\oplus H^{p,q}_\sigma(\Sigma_{W,G};\CC),\end{equation}
where the LG/CY correspondence has been used on both factors in the form of 
Theorem \ref{thm:LGCY}.
By Remark \ref{rem:integergrading}, the first summand
is $\ZZ\times \ZZ$-graded and the second is 
$(\frac12,\frac12)+\ZZ\times \ZZ$-graded.
By the Landau--Ginzburg mirror symmetry theorem \ref{thm:Kra}, we have 
\begin{equation}\label{eq:mirrorincomplete}\Hcal^{p,q}_{H[\sigma]}(W)^{H}\cong 
\Hcal^{n-p,q}_{H^\vee}(W^\vee)^{(H[\sigma])^\vee}
=\Hcal^{n-p,q}_{H^\vee}(W^\vee)^{\ol{H^\vee}},\end{equation}
where in the second equality Proposition \ref{pro:BHdualities} yields $(H[\sigma])^\vee =\ol{H^\vee}$.
We study the last term after decomposing it into 
its $j_W$-invariant $(\ )_{j_W,+}$ part and its $j_W$-anti-invariant $(\ )_{j_W,-}$ part. 
In general, we have 
\begin{equation}\label{eq:beforelemma}\Hcal_{H}(W)^{\ol H}=\left (\Hcal_{H}(W)^{\ol H}\right)_{j_W,+} \oplus 
\left(\Hcal_{H}(W)^{\ol H}\right)_{j_W,-},\end{equation}
where the first summand is $\Hcal_{H}(W)^{H}$ because $\ol H$-invariance and $j_W$-invariance is equivalent to $H=\ol H[j_W]$-invariance. For the second summand we use the following result.
\begin{lem}\label{lem:strippingtwisting}
For any $j_W\in H\in \SL_W$, there is an explicit isomorphism 
which preserves the bidegree and 
exchange $\sigma$-invariant terms into $\sigma$-anti-invariant terms
\[
	\left(\Hcal_{H}(W)^{\ol H}\right)_{j_W,-}\cong\Hcal_{H\sigma}(W)^{H}.
\]
\end{lem}
\begin{proof}
The left-hand side is spanned by 
$j_W\text{-anti-invariant}$ terms of the two following forms 
\begin{align}\label{anti}
&\left[g, \prod_{j \in F_g\setminus \{0\}}x_j^{a_j-1} dx_0 \wedge \bigwedge_{j \in F_g} d x_j\right],\\
&\left[j_W g, \prod_{j \in F_{j_W g}} x_j^{a_j-1} \bigwedge_{j \in F_{j_W g}} d x_j \right],\label{inv}
\end{align}
where $g \in \ol H$.
Note that the spaces spanned elements of 
type \eqref{inv} is the $\sigma$-invariant
subspace and that the elements \eqref{anti}
span the $\sigma$-anti-invariant subspace.

The right hand side decomposes as follows, 
$$\Hcal_{H\sigma}(W)^H=\Hcal_{\ol H\sigma}(W)^H\oplus \Hcal_{\ol Hj_W \sigma}(W)^H.$$


The first summand 
$\Hcal_{\ol H\sigma }(W)^H$
is identified with the anti-invariant elements of $\Hcal_{H}(W)^{\ol H}_{j_W,-}$, given by elements of the form \eqref{anti}. The identification is given by
\[\left[g, \prod_{j \in F_g\setminus \{0\}}x_j^{a_j-1} dx_0 \wedge \bigwedge_{j \in F_g} d x_j\right]
	 \mapsto
	 \left[\sigma g, \prod_{j \in F_{\sigma g}} x_j^{a_j-1} \bigwedge_{j \in F_{\sigma g}} d x_j \right].
\]
Note that the new form is $j_W$-invariant; therefore we land in $\Hcal_{\ol H\sigma }(W)^H$.

The second summand of
$\Hcal_{\ol H\sigma }(W)^H$
is identified with the anti-invariant elements of $\Hcal_{H}(W)^{\ol H}_{j_W,-}$, given by elements of the form \eqref{inv}. The identification is defined by rewriting $j_W g, g \in \ol H$ in \eqref{inv} as
$\sigma \ol g$ for $\ol g=j_f g$; then we set
\[
	\left[\sigma \ol g, \prod_{j \in F_{\sigma g}} x_j^{a_j-1} \bigwedge_{j \in F_{\sigma g}} d x_j \right] \mapsto \left[\ol g, \prod_{j \in F_g\setminus \{0\}}x_j^{a_j-1} dx_0 \wedge \bigwedge_{j \in F_g} d x_j\right].
\]
We land in $\Hcal_{\ol Hj_W \sigma }(W)^H = \Hcal_{\ol H j_f}(W)^H$.

It is immediate to check that these map
 preserves the bidegree. As pointed out above, 
 they exchange $\sigma$-anti-invariant terms to $\sigma$-invariant terms.
\end{proof}

Then, we rewrite \eqref{eq:beforelemma} as 
\begin{equation}\label{eq:afterlemma}\Hcal_{H}(W)^{\ol H}=\Hcal_{H}(W)^{H} \oplus 
\Hcal_{H\sigma}(W)^{H},\end{equation}
Therefore, we can complete \eqref{eq:mirrorincomplete}
as follows
$$\Hcal^{p,q}_{H[\sigma]}(W)^{H}\cong 
\Hcal^{n-p,q}_{H^\vee}(W^\vee)^{(H[\sigma])^\vee}
=\Hcal^{n-p,q}_{H^\vee}(W^\vee)^{\ol{H^\vee}}
=\Hcal^{n-p,q}_{H^\vee[\sigma]}(W^\vee)^{{H^\vee}},
$$
In view of \eqref{eq:afterlemma}, this mirror map is
the direct sum of the two mirror maps
$$\Hcal^{p,q}_{H}(W)^{H}\cong \Hcal^{n-p,q}_{H^\vee}(W^\vee)^{H^\vee},\qquad \Hcal^{p,q}_{H\sigma}(W)^{H}\cong \Hcal^{n-p,q}_{H^\vee\sigma}(W^\vee)^{H^\vee};$$
indeed, the first isomorphism identifies 
$\ZZ\times \ZZ$-graded spaces and the second isomorphism identifies $(\frac12,\frac12)+\ZZ\times \ZZ$-graded spaces.

The first map is a direct application of 
Corollary~\ref{cor:Kra}; therefore it exchanges $\sigma$-invariant and $\sigma$-anti-invariant eigenspaces, 
see Remark \ref{rem:mirrorswitch}. 
The second map is the composite of two isomorphism which switch the $\sigma$-eigenspaces: the mirror symmetry theorem \ref{thm:Kra} and 
Lemma~\ref{lem:strippingtwisting}. Therefore, this isomorphism 
preserves the $\sigma$-invariant and $\sigma$-anti-invariant subspaces. 
We can now rephrase the two mirror theorems 
in light of the LG/CY correspondence: the first theorem concerns $H^*_{\CR}
(\Sigma_{W,H};\CC)$ and the second concerns
$H^*_{\sigma}
(\Sigma_{W,H};\CC)$. 
We summarize our results in the following statement where we used the notation $V^+, V^-$ 
to identify the $\sigma$-invariant subspace and the 
$\sigma$-anti-invariant subspace.
\begin{thm}\label{thm:semimirror}
Consider $(W=x_0^2+f,H)$, where $W$ is an invertible Calabi--Yau polynomial with involution $\sigma=(\frac12,0,\dots,0)$, and
$H$ satisfies $j_W\in H\subseteq \SL_W$.
Let $(W^\vee=x_0^2+f^\vee,H^\vee)$ 
be the dual pair. In all degrees $(p,q) \in \ZZ\times \ZZ$, we have 
\begin{enumerate}[(i)]
\item
$H^{p,q}_{\CR}
(\Sigma_{W,H};\CC)^\pm\cong
H^{n-1-p,q}_{\CR}(\Sigma_{W^\vee,H^\sigma};\CC)^\mp$;
\item
$H^{p,q}_\sigma(\Sigma_{W,H};\CC)(\frac12)\cong
H^{n-2-p,q}_\sigma(\Sigma_{W^\vee,H^\sigma};\CC)(\frac12)$.
\end{enumerate}
\end{thm}

The following corollary is a direct consequence of Proposition \ref{pro:alldim}.
\begin{cor}\label{cor:crepantsemi}
Let $\wt \Sigma$ and $\wt \Sigma^\vee$ be 
two crepant resolutions of the Gorenstein orbifolds 
$\Sigma_{W,H}$ and $\Sigma_{W^\vee,H^\vee}$
admitting an involution $\sigma$ lifting 
to $\wt \Sigma$ and $\wt \Sigma^\vee$ the involution $(\frac12,0,\dots,0)$. 
Then $\wt \Sigma_\sigma$ and $\wt \Sigma^\vee_\sigma$ are the disjoint union of 
varieties $\wt \Sigma(j)$ and $\wt \Sigma^\vee(j)$ 
of dimension $n-2-2j$ with $j\in \{0,1,\dots \lfloor\frac{n-2}2\rfloor\}$
and we have 
\begin{enumerate}[(i)]
\item
$H^{p,q}
(\wt\Sigma;\CC)^\pm\cong
H^{n-1-p,q}(\wt\Sigma^\vee;\CC)^\mp$;
\item
$\bigoplus_{j=0}^{\lfloor \frac{n}2\rfloor-1}H^{p-j,q-j}(\wt\Sigma_\sigma(j);\CC))\cong
\bigoplus_{j=0}^{\lfloor \frac{n}2\rfloor-1} H^{n-2-p-j,q-j}(\wt\Sigma^\vee_\sigma(j);\CC)$.
\end{enumerate}
\end{cor}

\begin{exa}
\label{exa:homog}
For any positive integer $n$, 
we consider 
\[
	W = x_0^2 + f, \quad \text{with} \quad f=x_1^{2n}+x_2^{2n}+\dots+x_n^{2n},
\]
together with $H = J_W$. Then, we consider the mirror; namely $W^\vee=W$ and $H^\vee=\SL_W$. 
The matching Hodge diamonds are presented for 
$n\le 5$ in Figures~\ref{fig:total-Hodge-fermat} and
\ref{fig:total-Hodge-mirror-fermat}.

The locus $\Sigma_W := \Sigma_{W,J_W} \subset \PP(n,1,\ldots,1)$ is represented by a smooth $(n-1)$-dimensional 
Calabi--Yau variety.
The Hodge diamond vanishes everywhere except for 
$h^{p,p}=1$, $0\le p\le n-1$, and 
the $n$-tuple $(h^{n-1,0},h^{n-2,1},\dots, h^{0,n-1})$. For $n=1,2,3,4,5$ we have listed the values of 
$(h^{n-1,0},h^{n-2,1},\dots, h^{0,n-1})$ in Figure \ref{fig:total-Hodge-fermat}.
Furthermore, since $\Sigma_{W}$ is representable, by definition, $H_{\sigma}(\Sigma_{W,G};\CC)$ is the cohomology of the $\sigma$-fixed locus 
$(\Sigma_{W,G})_\sigma$ up to an overall shift $(\frac12,\frac12)$. 
Notice that, in all these cases\footnote{In general, the $\sigma$-fixed locus is not 
reduced to $Z_{\PP(w_1,\dots,w_n)}(f)$; this happens because $\Sigma_{W}$ is a $\GG_m$-quotient stack (i.e. of the form $[U/\GG_m]$) and 
the $\sigma$-fixed locus is the fixed locus of $\sigma$ up to the $\GG_m$-action; see Example~\ref{exa:inhomog}.}, $(\Sigma_{W,G})_\sigma$ 
coincides with the smooth 
substack $Z_{\PP^{n-1}}(f)$. 
The Hodge diamond of the $\sigma$-cohomology vanishes except for 
$h_\sigma^{p,p}=1$, $0\le p\le n-2$, and 
the $n-1$-tuple $(h_\sigma^{n-2,0},h_\sigma^{n-3,1},\dots, h_\sigma^{0,n-2})$.
For $n=1$ there is no non-vanishing entry; for $n=2,3,4,5$ we have shown the explicit values in 
Figure~\ref{fig:total-Hodge-fermat}.

Figure \ref{fig:total-Hodge-fermat}
shows the five Hodge diamonds of 
$\Sigma_W$ for $n = 1,\ldots, 5$, inside 
which we have pictured 
the Hodge diamonds of $H^{*,*}_\sigma(W,G)$
 inscribed in square boxes whose coordinates belong to 
$(\frac12,\frac12)+\ZZ^2$. In this way we identify with a single 
rotation $(p,q)\mapsto (n-1-p,q)$ the two Hodge diamonds with the two mirror Hodge diamond for each $n$.
In view of a comparison with the mirror, we underline the Hodge numbers of the $\sigma$-anti-invariant part.

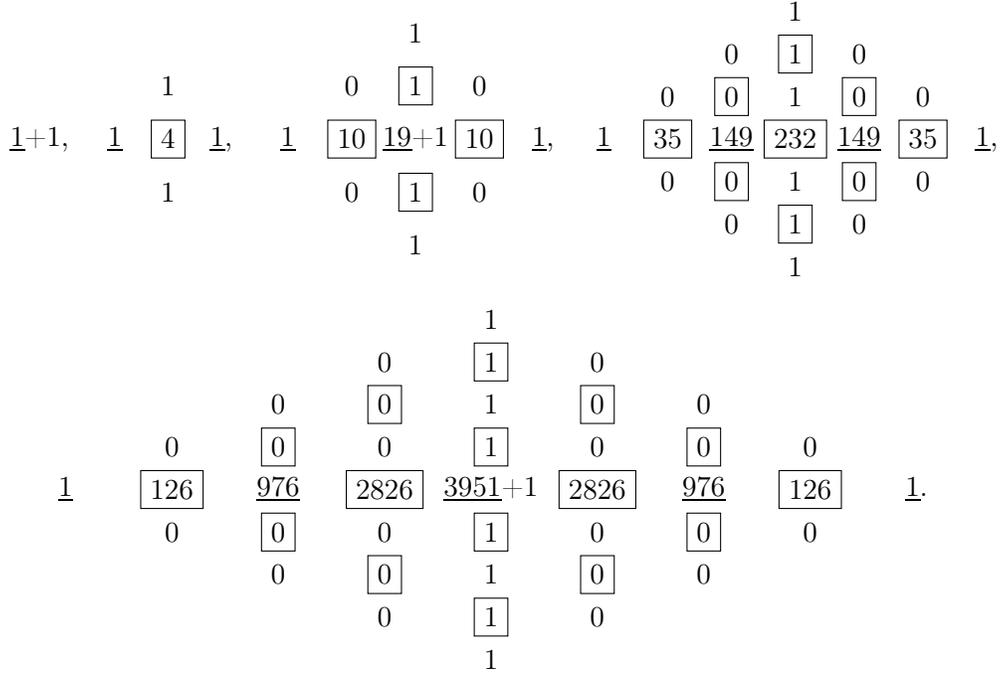
\begin{figure}[h!]
\begin{tikzpicture}

\begin{scope}[rotate=-135]
\node at (0,0) {\ul 1+1,};
\end{scope}

\begin{scope}[shift={($ (45:1) + (1,0) $)},rotate=-135]
\node at (0,0) {1};

\node at (1,0) {\ul 1};
\node at (1/2,1/2) [rectangle,draw] {4};
\node at (0,1) {\ul 1,};

\node at (1,1) {1};
\end{scope}

\begin{scope}
[xscale=1.2,shift={($ (45:2) + (11/4,0) $)},rotate=-135]
\node at (0,0) {1};

\node at (1,0) {0};
\node at (1/2,1/2) [rectangle,draw] {1};
\node at (0,1) {0};

\node at (2,0) {\ul 1};
\node at (3/2,1/2) [rectangle,draw] {10};
\node at (1,1) {\ul{19}+1};
\node at (1/2,3/2) [rectangle,draw] {10};
\node at (0,2) {\ul 1,};

\node at (2,1) {0};
\node at (3/2,3/2) [rectangle,draw] {1};
\node at (1,2) {0};

\node at (2,2) {1};
\end{scope}

\begin{scope}[xscale=1.2,yscale=0.8]
\begin{scope}[shift={($ (45:3) + (25/4,0) $)},rotate=-135]
\node at (0,0) {1};

\node at (1,0) {0};
\node at (1/2,1/2) [rectangle,draw] {1};
\node at (0,1) {0};

\node at (2,0) {0};
\node at (3/2,1/2) [rectangle,draw] {0};
\node at (1,1) {1};
\node at (1/2,3/2) [rectangle,draw] {0};
\node at (0,2) {0};

\node at (3,0) {\ul 1};
\node at (5/2,1/2) [rectangle,draw] {35};
\node at (2,1) {\ul{149}};
\node at (3/2,3/2) [rectangle,draw] {232};
\node at (1,2) {\ul{149}};
\node at (1/2,5/2) [rectangle,draw] {35};
\node at (0,3) {\ul 1,};

\node at (3,1) {0};
\node at (5/2,3/2) [rectangle,draw] {0};
\node at (2,2) {1};
\node at (3/2,5/2) [rectangle,draw] {0};
\node at (1,3) {0};

\node at (3,2) {0};
\node at (5/2,5/2) [rectangle,draw] {1};
\node at (2,3) {0};

\node at (3,3) {1};
\end{scope}
\end{scope}

\begin{scope}[xscale=2.0,yscale=0.8]
\begin{scope}[shift={(3,-3)},rotate=-135]
\node at (0,0) {1};

\node at (1,0) {0};
\node at (1/2,1/2) [rectangle,draw] {1};
\node at (0,1) {0};

\node at (2,0) {0};
\node at (3/2,1/2) [rectangle,draw] {0};
\node at (1,1) {1};
\node at (1/2,3/2) [rectangle,draw] {0};
\node at (0,2) {0};

\node at (3,0) {0};
\node at (5/2,1/2) [rectangle,draw] {0};
\node at (2,1) {0};
\node at (3/2,3/2) [rectangle,draw] {1};
\node at (1,2) {0};
\node at (1/2,5/2) [rectangle,draw] {0};
\node at (0,3) {0};

\node at (4,0) {\ul 1};
\node at (7/2,1/2) [rectangle,draw] {126};
\node at (3,1) {\ul{976}};
\node at (5/2,3/2) [rectangle,draw] {2826};
\node at (2,2) {\ul{3951}+1};
\node at (3/2,5/2) [rectangle,draw] {2826};
\node at (1,3) {\ul{976}};
\node at (1/2,7/2) [rectangle,draw] {126};
\node at (0,4) {\ul{1}.};

\node at (4,1) {0};
\node at (7/2,3/2) [rectangle,draw] {0};
\node at (3,2) {0};
\node at (5/2,5/2) [rectangle,draw] {1};
\node at (2,3) {0};
\node at (3/2,7/2) [rectangle,draw] {0};
\node at (1,4) {0};

\node at (4,2) {0};
\node at (7/2,5/2) [rectangle,draw] {0};
\node at (3,3) {1};
\node at (5/2,7/2) [rectangle,draw] {0};
\node at (2,4) {0};

\node at (4,3) {0};
\node at (7/2,7/2) [rectangle,draw] {1};
\node at (3,4) {0};

\node at (4,4) {1};
\end{scope}
\end{scope}

\end{tikzpicture}
\caption{\footnotesize The Hodge diamonds for $W = x_0^2 + x_1^{2n} + \cdots + x_n^{2n}$ and $H=J_W$, 
$n = 1,\ldots,5$.}
\label{fig:total-Hodge-fermat}
\end{figure}

\begin{figure}[hh]
\begin{tikzpicture}[yscale=0.9]

\begin{scope}[rotate=-135]
\node at (0,0) {\ul 1+1,};
\end{scope}

\begin{scope}[shift={($ (45:1) + (1,0) $)},rotate=-135]
\node at (0,0) {1};

\node at (1,0) {\ul 1};
\node at (1/2,1/2) [rectangle,draw] {4};
\node at (0,1) {\ul 1,};

\node at (1,1) {1};
\end{scope}

\begin{scope}[xscale=1.2,shift={($ (45:2) + (11/4,0) $)},rotate=-135]
\node at (0,0) {1};

\node at (1,0) {0};
\node at (1/2,1/2) [rectangle,draw] {10};
\node at (0,1) {0};

\node at (2,0) {\ul 1};
\node at (3/2,1/2) [rectangle,draw] {1};
\node at (1,1) {\ul1 +{19}};
\node at (1/2,3/2) [rectangle,draw] {1};
\node at (0,2) {\ul 1,};

\node at (2,1) {0};
\node at (3/2,3/2) [rectangle,draw] {10};
\node at (1,2) {0};

\node at (2,2) {1};
\end{scope}

\begin{scope}[xscale=1.2,yscale=0.8]
\begin{scope}[shift={($ (45:3) + (25/4,0) $)},rotate=-135]
\node at (0,0) {1};

\node at (1,0) {0};
\node at (1/2,1/2) [rectangle,draw] {35};
\node at (0,1) {0};

\node at (2,0) {0};
\node at (3/2,1/2) [rectangle,draw] {0};
\node at (1,1) {149};
\node at (1/2,3/2) [rectangle,draw] {0};
\node at (0,2) {0};

\node at (3,0) {\ul1};
\node at (5/2,1/2) [rectangle,draw] {1};
\node at (2,1) {\ul1};
\node at (3/2,3/2) [rectangle,draw] {232};
\node at (1,2) {\ul1};
\node at (1/2,5/2) [rectangle,draw] {1};
\node at (0,3) {\ul1,};

\node at (3,1) {0};
\node at (5/2,3/2) [rectangle,draw] {0};
\node at (2,2) {149};
\node at (3/2,5/2) [rectangle,draw] {0};
\node at (1,3) {0};

\node at (3,2) {0};
\node at (5/2,5/2) [rectangle,draw] {35};
\node at (2,3) {0};

\node at (3,3) {1};
\end{scope}
\end{scope}

\begin{scope}[xscale=2,yscale=0.8]
\begin{scope}[shift={(3,-3)},rotate=-135]
\node at (0,0) {1};

\node at (1,0) {0};
\node at (1/2,1/2) [rectangle,draw] {126};
\node at (0,1) {0};

\node at (2,0) {0};
\node at (3/2,1/2) [rectangle,draw] {0};
\node at (1,1) {976};
\node at (1/2,3/2) [rectangle,draw] {0};
\node at (0,2) {0};

\node at (3,0) {0};
\node at (5/2,1/2) [rectangle,draw] {0};
\node at (2,1) {0};
\node at (3/2,3/2) [rectangle,draw] {2826};
\node at (1,2) {0};
\node at (1/2,5/2) [rectangle,draw] {0};
\node at (0,3) {0};

\node at (4,0) {\ul1};
\node at (7/2,1/2) [rectangle,draw] {1};
\node at (3,1) {\ul1};
\node at (5/2,3/2) [rectangle,draw] {1};
\node at (2,2) {\ul1+3951};
\node at (3/2,5/2) [rectangle,draw] {1};
\node at (1,3) {\ul1};
\node at (1/2,7/2) [rectangle,draw] {1};
\node at (0,4) {\ul1.};

\node at (4,1) {0};
\node at (7/2,3/2) [rectangle,draw] {0};
\node at (3,2) {0};
\node at (5/2,5/2) [rectangle,draw] {2826};
\node at (2,3) {0};
\node at (3/2,7/2) [rectangle,draw] {0};
\node at (1,4) {0};

\node at (4,2) {0};
\node at (7/2,5/2) [rectangle,draw] {0};
\node at (3,3) {976};
\node at (5/2,7/2) [rectangle,draw] {0};
\node at (2,4) {0};

\node at (4,3) {0};
\node at (7/2,7/2) [rectangle,draw] {126};
\node at (3,4) {0};

\node at (4,4) {1};
\end{scope}
\end{scope}

\end{tikzpicture}
\caption{\footnotesize The Hodge diamonds for $W^\vee = x_0^2 + x_1^{2n} + \cdots + x_n^{2n}$ and $H^\vee = \SL_W$, $n = 1,\ldots,5$.}
\label{fig:total-Hodge-mirror-fermat}
\end{figure}

Let us now turn to the dual pair: 
$W^\vee = W =x_0^2+x_1^{2n}+\dots+x_n^{2n}$ paired with $H^\vee = \SL_W$. Figure \ref{fig:total-Hodge-mirror-fermat} shows the Hodge diamonds for $H^{p,q}(\Sigma_{W,\SL_W};\CC)$ and that of 
$H^{p,q}_\sigma(W,\SL_W)$ with a shift of $(1/2,1/2)$. Again, within 
the Hodge diamond, we have underlined the ranks of the $\sigma$-anti-invariant subspaces.
Here, $\ol \SL_W$ is isomorphic to $(\ZZ/2n\ZZ)^{n-1}$ and a basis is given for instance by the elements $\frac 1{2n} (0,1,\ldots,0,2n-1), \ldots, \frac 1{2n} (0,0,\ldots,1,2n-1)$. $\SL_W$ is generated by $\ol \SL_W$ and $j_W$.

Comparing Figure~\ref{fig:total-Hodge-mirror-fermat}
with Figure~\ref{fig:total-Hodge-fermat}, we see 
that the indices within square boxes match after a right angle rotation: the $(\frac 12+\ZZ, \frac 12+\ZZ)$-graded part is given by the $(\frac 12+\ZZ, \frac 12+\ZZ)$-graded part of the mirror Hodge diamond, rotated by a right angle. 
When we look at the indices not inscribed in square boxes, we see that the underlined number match all non-underlined numbers: this is part $(i)$ of 
Theorem \ref{thm:semimirror}.
\end{exa}

\subsection{K3 surfaces with anti-symplectic involutions}\label{subsection:ABS}
A pair $(\Sigma,\sigma)$, formed by a K3 surface $\Sigma$ and 
an anti-symplectic involution
$\sigma\colon \Sigma\rightarrow \Sigma$, may be regarded as 
a lattice-polarized K3 surface: the polarization is 
given by the
 $\sigma$-invariant lattice $M = H^2(S,\ZZ)^\sigma$
within $\Lambda=H^2(S,\ZZ)$, which is 
equipped with a lattice structure isomorphic to 
$U^{\oplus 8}\oplus E_8(-1)^{\oplus 2}$ via the cup product taking values in $H^4(\Sigma;\ZZ)=\ZZ$.

Nikulin \cite{Ni1} showed that the lattices obtained in this
way are $2$-elementary: 
their discriminant group $\Hom(M,\ZZ)/M$ is isomorphic to $(\ZZ/2)^a$ 
for a some $a$. 
Two-elementary lattices are classified up to isometry by three invariants:
the rank of the lattice $r$, 
the rank $a$ of $\Hom(M,\ZZ)/M$ over $\ZZ/2$, and 
$\delta\in \{0,1\}$, vanishing if and only if $x^2 \in \ZZ$ for all $x \in \Hom(M,\ZZ)/M$.
All the possible 75 triples $(r,a,\delta)$ 
of the lattices $M$ arising from K3 surfaces with anti-symplectic involution
are pictured below. A dot, respectively a circle, in position $(r,a)$ 
indicates the existence of a K3 with involution 
whose invariants are $(r,a,1)$, respectively $(r,a,0)$.
 \begin{figure}[!ht] 
\begin{tikzpicture}[scale=.25]
\filldraw [black] 
(1,1) circle (5pt) 
(2,0) node[draw,circle,scale=0.6]{}
(2,2) circle (5pt) 
(2,2) node[draw,circle,scale=0.6]{}
 (3,1) circle (5pt) 
 (3,3) circle (5pt)
 (4,2) circle (5pt)
(4,4) circle (5pt)
(5,3) circle (5pt)
(5,5) circle (5pt)
(6,4) circle (5pt)node[draw,circle,scale=0.6]{} 
(6,2) node[draw,circle,scale=0.6]{} 
(6,6) circle (5pt) 
(7,3) circle (5pt) 
(7,5) circle (5pt)
(7,7) circle (5pt) 
(8,2) circle (5pt)
(8,4) circle (5pt)
(8,6) circle (5pt) 
(8,8) circle (5pt)
(9,1) circle (5pt) 
(9,3) circle (5pt)
(9,5) circle (5pt)
(9,7) circle (5pt)
(9,9) circle (5pt)
(10,0) node[draw,circle,scale=0.6]{}
(10,2) circle (5pt)node[draw,circle,scale=0.6]{} 
(10,4) circle (5pt)node[draw,circle,scale=0.6]{}
(10,6) circle (5pt)node[draw,circle,scale=0.6]{}
(10,8) circle (5pt)node[draw,circle,scale=0.6]{}
(10,10) circle (5pt)node[draw,circle,scale=0.6]{}
(11,1) circle (5pt) 
(11,3) circle (5pt)
(11,5) circle (5pt)
(11,7) circle (5pt)
(11,9) circle (5pt)
(11,11) circle (5pt) 
(12,2) circle (5pt)
(12,4) circle (5pt)
(12,6) circle (5pt) 
(12,8) circle (5pt)
(12,10) circle (5pt) 
(13,3) circle (5pt) 
(13,5) circle (5pt)
(13,7) circle (5pt) 
(13,9) circle (5pt) 
(14,2) node[draw,circle,scale=0.6]{}
(14,4) circle (5pt)node[draw,circle,scale=0.6]{}
(14,6) circle (5pt)node[draw,circle,scale=0.6]{}
(14,8) circle (5pt) 
(15,3) circle (5pt)
(15,5) circle (5pt)
(15,7) circle (5pt)
(16,2) circle (5pt)
(16,4) circle (5pt)
(16,6) circle (5pt) 
(17,1) circle (5pt) 
(17,3) circle (5pt)
(17,5) circle (5pt)
(18,0) node[draw,circle,scale=0.6]{}
(18,2) circle (5pt)node[draw,circle,scale=0.6]{}
(18,4) circle (5pt) node[draw,circle,scale=0.6]{}
(19,1) circle (5pt)
(19,3) circle (5pt)
(20,2) circle (5pt)
 ;
 

\draw[->] (10,12) -- coordinate (N' axis mid) (-1,1);
\draw[->] (10,12) -- coordinate (N axis mid) (21,1);
\draw[->] (0,0) -- coordinate (x axis mid) (22,0);
 \draw[->] (0,0) -- coordinate (y axis mid)(0,13);
 \foreach \x in {10,20}
 \draw [xshift=0cm](\x cm,0pt) -- (\x cm,-3pt)
 node[anchor=north] {$\x$};
 \foreach \y in {5,10}
 \draw (1pt,\y cm) -- (-3pt,\y cm) node[anchor=east] {$\y$};
 \node[below=-0.3cm, right=-0.5cm] at (N' axis mid) {$g$};
 \node[below=-0.3cm, right=0cm] at (N axis mid) {$N$};
 \node[below=0.3cm, right=2.6cm] at (x axis mid) {$r$};
 \node[left=0.3cm, below=-1.6cm] at (y axis mid) {$a$};
 \end{tikzpicture} 
 \end{figure}
 
\noindent The twelve cases 
satisfying $r + a = 22$ 
or $(r,a,\delta)=(14, 6, 0)$ are special.
They are precisely the cases we need to take off for 
the figure to possess a symmetry with 
respect to the vertical axis $r=10$. The explanation is 
mirror symmetry of lattice polarized
K3 surfaces.
Voisin \cite{Vo} proved that the 2-elementary lattices $M = H^2(S,\ZZ)^\sigma$, 
which are not among the twelve special cases 
($r + a = 22$ 
or $(r,a,\delta)=(14, 6, 0)$),
are exactly those possessing a
perpendicular lattice $M^\perp$ within 
$\Lambda$
satisfying 
\begin{equation}\label{eq:precondition}
M^\perp \cong U \oplus M^\vee.\end{equation}
We refer to $M^\vee$ as the mirror lattice, and we notice that 
 $(M^\vee)^\perp$ is isomorphic to $U\oplus M$; hence $(M^\vee)^\vee=M$.
 For such lattices, the mirror lattice $M^\vee$ 
 has invariants $(20-r,a,\delta)$. This explains the symmetry 
 appearing within the picture given above.

Dolgachev constructs a coarse 
moduli space $\mathcal{K}_M$ attached to any such lattice and classifying 
$M$-polarized K3 surfaces, \emph{i.e.} 
pairs $(S,j)$, 
where $S$ is a K3 surface and $j\colon M \hookrightarrow \Pic(S)$ is a primitive lattice embedding (this holds for an
even non-degenerate 
lattice $M$ of signature $(1,\rho-1)$, $1 \leq \rho \leq 19$ 
with 
a primitive embedding $M \hookrightarrow \Lambda$).
Two lattice-polarized 
K3 surfaces with an anti-symplectic involution
form a mirror pair if they are represented by two points lying in the two mirror spaces 
$\mathcal{K}_M$ and $\mathcal{K}_{M^\vee}$.

In the statement below, we summarize the connection between the lattice invariants $r$ and $a$ and the topological invariants of the K3 surface $\Sigma$ and the 
involution $\sigma$. We recall (see \cite{AST}) that the 
rank $r$ is related to the Euler characteristic
of the $\sigma$-fixed locus $\Sigma_\sigma=C$ as follows
\[ \chi(C)=2r-20 \]
(the right hand side is the trace of $\sigma$ on $H^{1,1}$, Lefschetz fixed point theorem). On the other hand, 
by the Smith exact sequence,
the rank $2a$ is the difference between the 
dimension of the cohomology of $\Sigma$ and 
of 
$C$
\[\dim H^*(\Sigma;\CC)-2a=\dim H^*(C;\CC),\]
unless $C=\varnothing$ where the above formula holds with $4$ on the right hand side.
This yields the following relations.
\begin{pro}[\cite{Ni1}, Theorem 4.2.2]
Let $\Sigma$ be a
K3 
surface with an anti-symplectic involution $\sigma$.
The $\sigma$-fixed locus $C$ is a 
disjoint union of $k$ smooth curves $C_1,\dots,C_k$ whose total genus equals $g=\sum_i g(C_i)$. 
Let $r$ and $a$ be 
the rank of the lattice $M=H^2(\Sigma;\ZZ)_\sigma$, 
and of $\Hom(M,\ZZ)/M$.
We have 
\[
k=\frac{r-a}2+1, \qquad g=-\frac{r+a}2+11.
\]
except when $(r,a,\delta)=(10,10,0)$ where the fixed locus is empty, \emph{i.e.} 
$k=0$ and $g=0$.
Except from the case $(r,a,\delta)=(10,8,0)$,
where $C$ is the union of two elliptic curves,
the fixed locus contains at most 
one single component of genus $g>0$
and is topologically determined by $g$ and $k$: we
have
$C\cong C_1\sqcup \bigsqcup_{i=2}^{k} \PP^1$ (with 
$g(C_1)=g$).
\qed
\end{pro}
In view of the above lemma, mirror duality can be 
regarded as a symmetry along the axis $k=g$ interchanging 
$k$ with $g$.
Indeed, by Proposition \ref{pro:crthm}, the invariants $k$ and $g$ equal respectively $h_\sigma^{0,0}(\frac12)$ and $h_\sigma^{1,0}(\frac12)$. 
Artebani, Boissière and Sarti \cite{ABSBHCR} 
compute the corresponding invariants $(r,a,\delta)$ in all possible cases of Berglund--H\"ubsch duality.
Out of the 75 Nikulin's possible triples $(r,a,\delta)$ only 29 possible triples $(r,a,\delta)$ arise via Berglund--H\"ubsch duality.
Neither the twelve special triples without mirror, nor the single case with empty $\sigma$-fixed locus, nor the single case with $\sigma$-fixed locus given by two elliptic curves ever occur among these $29$ cases. 
Furthermore, if the invariant attached to $(W,G)$ equals $(r,a,\delta)$, then the invariant of the Berglund--Hübsch mirror
$(W,G^\vee)$ equals $(20-r,a, \delta)$. This proves the compatibility of 
Berglund--Hübsch construction with the lattice mirror symmetry of polarized K3 surfaces. 
The computation of \cite{ABSBHCR} is in several cases spectacular; see for instance Example \ref{exa:inhomog} below. 
The proof of the compatibility between the two mirror constructions relies on a case-by-case study and is often based on a computer calculation. Clearly, not all computations are explicit in the literature. 

The present paper remedies this. We point out how Theorem \ref{thm:semimirror} and Proposition \ref{pro:crthm}
yield a conceptual 
understanding of the relations 
$r^\vee=20-r,a^\vee=a$ as a consequence of the fact that $h_\sigma^{0,0}(\frac12)$ and $h_\sigma^{1,0}(\frac12)$ are exchanged by mirror duality. We obtain the following 
corollary.
\begin{cor}\label{cor:semimirror}
In the same conditions as in Thm.~\ref{thm:semimirror} we set $n=3$
so that $\Sigma_{W,H}$ and 
$\Sigma_{W^\vee,H^\vee}$ are $2$-dimensional stacks
and we write $\wt \Sigma$ and 
$\wt \Sigma^\vee$ for the 
K3 surfaces arising from the minimal resolutions
of their coarse spaces.
We denote by 
$\sigma$ their anti-symplectic involutions
and by $C$ and $C^\vee$ their respective fixed loci,
which are disjoint unions of $k$ and $k^\vee$ smooth curves whose total genus equals $g$ and $g^\vee$.
Then we have 
\[H^{p,q}(\wt \Sigma;\CC)^\pm\cong H^{2-p,q}(\wt \Sigma^\vee;\CC)^\mp,\qquad H^{p,q}(C;\CC)\cong H^{1-p,q}(C^\vee;\CC).\]
In other words we have 
\[\mathrm{rk}(H^2(\wt\Sigma;\ZZ)_\sigma)=20-\mathrm{rk}(H^2(\wt\Sigma^\vee;\ZZ)_\sigma),\qquad
g=k^\vee,\qquad 
k=g^\vee.\]
\end{cor}

We illustrate the result with an example.

\begin{exa}\label{exa:inhomog}
Consider the degree-$18$ polynomial 
\[ W = x_0^2 + f(x_1,x_2,x_3,x_4)=x_0^2+x_1^4x_3+x_3^7x_1+x_2^6,\] 
where the variables have weights $(9,4,3,2)$. 
Consider the group $H=J_W$, which coincides with $\SL_W$ in this case. 
The action by $\sigma$ clearly fixes the curve $\{x_1^4x_3+x_3^7x_1+x_2^6=0\}$ within the linear subspace $\{x_0=0\}=\PP(4,3,2)\subset \PP(9,4,3,2)$. 
It is crucial, however, to notice that $\sigma$ fixes also $\{x_2=0\}$; indeed if we compose $\sigma$ with
with the weighted $(9,4,3,2)$-action of $\la=-1$ we get a diagonal action 
fixing every variable except $x_2$, whose sign is changed. 
As a result, the fixed locus is larger than $Z_{\PP(w_1,w_2,w_3)}(f)$. In this example, one can show that it 
is connected but not irreducible, and not even smooth: the curve $C=\{x_0=0, x_1^4x_3+x_3^7x_1+x_2^6=0\}$ 
and the curve $R=\{x_2=0, x_0^2+x_1^4x_3+x_3^7x_1=0\}$ 
intersect at 5 points. 
In the light of Proposition~\ref{pro:crthm} and 
the argument of its proof we are looking at a twisted curve lying as a closed 
substack within $\Sigma_W=\Sigma_{W,H}$; notice 
that it has stabilizers of even order at the nodes.

We now compute the $\sigma$-orbifold cohomology of $\Sigma_W$, which is the cohomology of the $\sigma$-inertia stack $\mf I_{\Sigma_W}^\sigma$. By Proposition \ref{pro:crthm}, this coincides with the cohomology of the fixed space of the resolution. We apply \eqref{eq:sigmaorbifoldmadeexplicit}. 
More precisely, there are four values of $\la$ for which the hypersurface 
$Z(W_{\sigma\la})$ in $\CC^4_{\sigma\la}\setminus \pmb 0$ is non-empty. These are the fourth roots of unity. 

For $\la=1$, we examine the hypersurface defined by 
the restriction of $W$ to the linear subspace defined 
by $x_1,x_2,$ and $x_3$. This is the curve 
$\{x_0=0, x_1^4x_3+x_3^7x_1+x_2^6=0\}$ fixed by~$\sigma$. 
The standard genus formula within weighted projective spaces 
or the computation of primitive cohomology via the Milnor ring show that this curve 
has genus $3$. The contribution to $h^{*,*}_\sigma(\Sigma_{W};\CC)(\frac12)$ is 
precisely $1$ in bidegrees $(0,0)$ and $(1,1)$ and
$3$ in bidegrees $(1,0)$ and $(0,1)$ (note that the age is $\frac12$).

For $\la=-1$, we examine the hypersurface 
$\{W_{\sigma\la}=0\}$ modulo $\sigma$ defined by the 
restriction of $W$ to the linear subspace fixed by
$\sigma\la$ which acts by multiplication by 
$1,1,-1,$ and $1$ on $x_0$, $x_1$, $x_2$, and 
$x_3$. This is the curve 
$\{x_2=0, x_0^2+x_1^4x_3+x_3^7x_1=0\}$, whose coarse space 
is a (rational) double cover
of $\PP(4,2)$.
The contribution to $h^{*,*}_\sigma(\Sigma_{W};\CC)(\frac12)$ is $1$ 
in both bidegrees $(0,0)$ and $(1,1)$. 

For $\la=\cxi$, we notice that $\sigma \la$ acts as $\frac14(3,0,3,2)$
and that $W$ vanishes identically on the fixed space. 
The age shift is $\age (\sigma\la)-\frac12=\frac32$ by 
a straightforward application of Remark \ref{rem:agecorrection}. 
This is the age of 
the vector bundle tangent to ${[\PP(\textstyle{\frac{d}2},w_1,\dots,w_n)/\sigma]}$ minus the 
age of the line bundle normal to $[\Sigma_W/\sigma]$. The latter 
is linearized by a character of
weight $\deg W =18$; since $\langle18/4\rangle=1/2$ this yields the above correction $-1/2$ (see Remark \ref{rem:agerounddown}). The contribution to $h^{*,*}_\sigma(\frac12)$ 
is $1$ 
in bidegree $(1,1)$. 

The analysis of the case $\la=-\cxi$ is completely analogous, 
$\sigma \la$ acts as $\frac14(1,0,1,2)$ and that $x_0^2+W$ vanishes identically on the fixed space. The age is $1-\frac12=\frac12$ (by the same argument as above).
The contribution to $h^{*,*}_\sigma(\frac12)$ 
is $1$ 
in bidegree $(0,0)$.

In Figure~\ref{fig:total-Hodge-inhomog}, we represent the Hodge diamond of $H^{*,*}_{\CR}(\Sigma_W;\CC)$, which is 
the usual K3 surface Hodge diamond, and --- within it --- that of $H_\sigma^{*,*}$.
As above, we record the ranks of the 
$\sigma$-anti-invariant subspaces by underlining 
all the corresponding entries in the Hodge diamond.

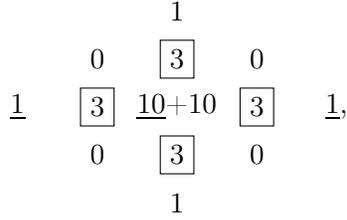
\begin{figure}
\begin{tikzpicture}[xscale=1.5,yscale=0.9]
\begin{scope}[rotate=-135]
\node at (0,0) {1};

\node at (1,0) {0};
\node at (1/2,1/2) [rectangle,draw] {3};
\node at (0,1) {0};

\node at (2,0) {\ul 1};
\node at (3/2,1/2) [rectangle,draw] {3};
\node at (1,1) {\ul{10}+10};
\node at (1/2,3/2) [rectangle,draw] {3};
\node at (0,2) {\ul 1,};

\node at (2,1) {0};
\node at (3/2,3/2) [rectangle,draw] {3};
\node at (1,2) {0};

\node at (2,2) { 1};
\end{scope}
\end{tikzpicture}
\caption{\footnotesize Total Hodge diamond in Example~\ref{exa:inhomog}.}
\label{fig:total-Hodge-inhomog}
\end{figure}

Regarding the mirror side, notice that the polynomial $W^\vee$ is equal to $W$ and that $\SL_W$ coincides with $(j_W^2)$. Therefore Theorem~\ref{thm:semimirror} predicts that the Hodge diamond appearing in Figure~\ref{fig:total-Hodge-inhomog} is stable with respect to right angle rotations.

This symmetry is the result of the fact that the $\sigma$-invariant part and anti-invariant part coincide up to a right-angle rotation, and of the fact that the $(\frac 12 + \ZZ, \frac 12 +\ZZ)$-graded part is itself symmetric.

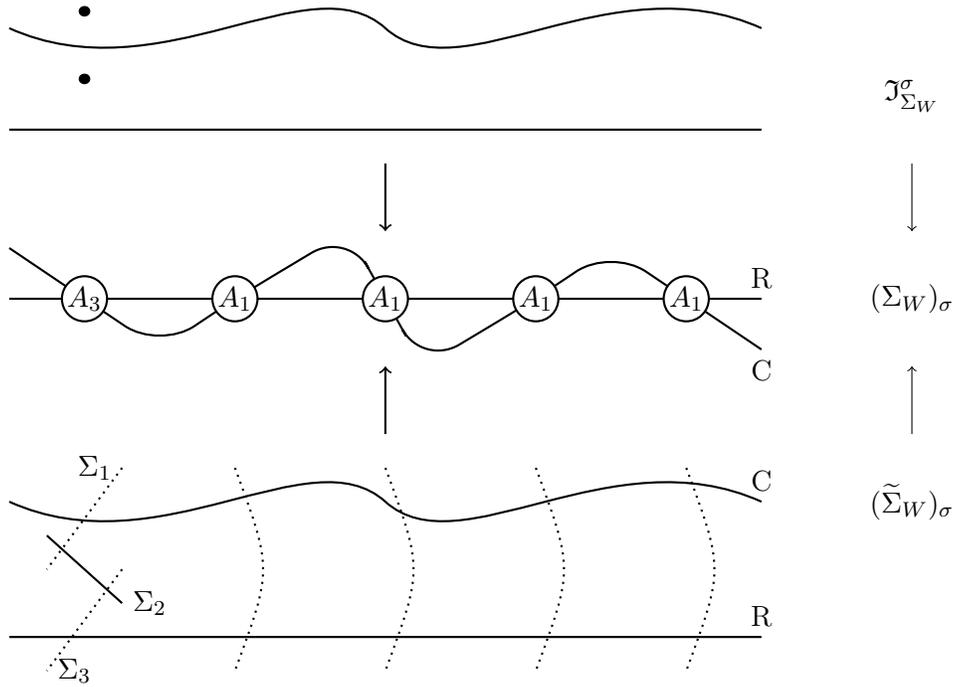
\begin{figure}[h]
\begin{tikzpicture}
[singularity/.style={circle,draw,fill=white,thick,
inner sep=0pt,minimum size=6mm},yscale=0.9]

\filldraw (-4,4.25) circle (2pt);
\filldraw (-4,3.25) circle (2pt);

\draw[thick] (-5,4) .. controls (-3,3) and (-1,5) .. (0,4)
 .. controls (1,3) and (3,5) .. (5,4);

\draw[thick] (-5,2.5) -- (5,2.5);

\node at (7,3) {$\mf I_{\Sigma_W}^\sigma$};
\node at (7,0) {$(\Sigma_W)_\sigma$};
\node at (7,-3) {$(\wt \Sigma_W)_\sigma$};

\draw[->] (7,2) -- (7,1);
\draw[->] (7,-2) -- (7,-1); 

\draw[thick,->] (0,2) -- (0,1); 
\draw[thick,->] (0,-2) -- (0,-1); 

\draw[thick] (-5,0) -- (5,0) node[above] {R};
\draw[thick,rounded corners=16pt] (-5,0.75) -- (-4,0) node [singularity] {$A_3$} -- (-3,-0.75) -- (-2,0) node [singularity] {$A_1$} -- (-0.5,1) -- (0,0) node [singularity] {$A_1$} -- (0.5,-1) -- (2,0) node [singularity] {$A_1$} -- (3,0.75) -- (4,0) node [singularity] {$A_1$} -- (5,-0.75) node[below] {C};

\draw[thick] (-5,-3) .. controls (-3,-4) and (-1,-2) .. (0,-3)
 .. controls (1,-4) and (3,-2) .. (5,-3) node[above] {C};

\draw[thick,dotted] (-2,-2.5) .. controls (-1.5,-4) .. (-2,-5.5);
\draw[thick,dotted] ( 0,-2.5) .. controls ( 0.5,-4) .. ( 0,-5.5);
\draw[thick,dotted] ( 2,-2.5) .. controls ( 2.5,-4) .. ( 2,-5.5);
\draw[thick,dotted] ( 4,-2.5) .. controls ( 4.5,-4) .. ( 4,-5.5);

\draw[thick,dotted] (-3.5,-2.5) node[left] {$\Sigma_1$} -- (-4.5,-4);
\draw[thick] (-4.5,-3.5) -- (-3.5,-4.5) node[right] {$\Sigma_2$};
\draw[thick,dotted] (-3.5,-4) -- (-4.5,-5.5) node[right] {$\Sigma_3$};

\draw[thick] (-5,-5) -- (5,-5) node[above] {R} ;

\end{tikzpicture}
\caption{\footnotesize The $\sigma$-fixed twisted curve $(\Sigma_W)_\sigma$, the fixed curve within the K3 resolution $(\wt \Sigma_W)_\sigma$ and the 
$\sigma$-inertia stack $\mf I_{\Sigma_W}^\sigma$ defined by $W =x_0^2 +x_1^4x_3 + x_3^7x_1 + x_2^6$.}
\label{fig:example}
\end{figure}

In \cite[Exa.~5.1]{ABSBHCR}, the authors resolve the coarse space of $\Sigma_W$ and 
study the fixed locus of the involution induced by $\sigma$
on the resolution $\wt \Sigma_W\to \Sigma_W$. The fixed locus consists of 
$3$ connected components: a genus-$3$ curve and two projective lines. 
As a consequence of Proposition \ref{pro:crthm}, the Hodge diamond of $H^*(\wt \Sigma_W;\CC)$
matches that of $H_{\sigma}^{*}(\Sigma_{W})(\frac12)$ appearing within boxes in Figure~\ref{fig:total-Hodge-inhomog}: $h^{0,0}=3, h^{1,0}=3, h^{0,1}=3, h^{1,1}=3$.

We illustrate in Figure \ref{fig:example} all the different pictures involved 
here:\begin{enumerate}
\item the $\sigma$-inertia stack $\mf I_{\Sigma_W}^\sigma$ which records separately the fixed locus of $\sigma \lambda$ for each $\lambda$ a fourth root of unity. 
\item the $\sigma$-fixed locus $(\Sigma_W)_\sigma$, \emph{i.e.} the twisted curve $C\cup R$ described above;
\item the smooth curve $(\wt\Sigma_W)_\sigma$ fixed within the K3 surface $\wt \Sigma_W$ in the following picture.
\end{enumerate}
Indeed, the resolution of the three simple singularities occurring at the nodes of 
the twisted curves yields chains of curves 
of the same length as their singularity 
index. It is now easy to detect the fixed locus by 
knowing that the genus-$3$ curve $C$ and the rational curve $R$ are fixed and the chains contain alternatively 
$\sigma$-fixed subcurves and 
moving subcurves, where $\sigma$ is given by $\sigma \colon \PP_z^1\to \PP_z^1; z\mapsto -z$.
These moving rational curves are those 
which share a point with $C$ or $R$. Only the chain over the $A_3$-singularity yields a new fixed component $\Sigma_2$ (see Figure~\ref{fig:example}). The fixed locus is $C\sqcup R\sqcup \Sigma_2$, and its Hodge diamond matches the diamond given above:
$h^{0,0}=3, h^{1,0}=3, h^{0,1}=3, h^{1,1}=3$.
\end{exa}

\subsection{Borcea--Voisin Mirror Symmetry
in any dimension} \label{sect:generalized-BV}
The classical Borcea--Voisin construction involves 
an elliptic curve $E$ with its hyperelliptic involution
$\sigma_1$ and a K3 surface $K$ with anti-symplectic involution $\sigma_2$, and a crepant resolution of the quotient $E \times K/(\sigma_1 \times \sigma_2).$ From this setup
we obtain some of the earliest examples of mirror symmetry for Calabi--Yau threefolds. 
Consider 
$E \times K/(\sigma_1 \times \sigma_2)$ and $E \times K^\vee/(\sigma_1 \times \sigma^\vee_2)$
for any K3 surface $K^\vee$ with anti-symplectic involution $\sigma_2^\vee$ 
mirror to $(K,\sigma_2)$.
 Because the two quotients are three-dimensional 
and Gorenstein, 
crepant resolutions $\tilde \Sigma$ and $\tilde \Sigma^\vee$ 
exist and yield 
two mirror Calabi--Yau three-folds $\tilde\Sigma$ and $\tilde \Sigma^\vee$ 
satisfying 
\begin{equation}\label{eq:BV3fold}H^{p,q}(\tilde \Sigma;\CC)\cong H^{3-p,q}(\tilde \Sigma^\vee;\CC).\end{equation}

The point of view of this paper is that the mirror duality above, 
suitably stated, only relies on the properties proven in Theorem 
\ref{thm:semimirror}. For example, any elliptic curve 
alongside with its hyperelliptic involution trivially satisfies 
conditions (i) and (ii) of Theorem \ref{thm:semimirror} 
($h^{i,j}(E;\CC)=1$ for $i,j\in {0,1}$ and 
$h^{i,j}(E_\sigma;\CC)\neq 0$ only if $i,j=0$). Therefore any choice of elliptic curves on each side of the 
above duality leads to
Calabi--Yau three-folds satisfying \eqref{eq:BV3fold}. 
By considering the framework of Theorem \ref{thm:semimirror} we get a natural corollary 
generalising the above statement. 
Let $\Sigma_1=\Sigma_{W_1,H_1}$ and $\Sigma_2=\Sigma_{W_1,H_2}$ be 
Calabi--Yau orbifolds attached to two invertible CY polynomials with involutions $W_1=(x_0^1)^2+f_1$ and $W_2:=(x_0^2)^2+f_2$ and two groups $H_1$ and $H_2$ fitting in 
$j_{W_1}\in H_1 \subset \SL_{W_1}$ and $j_{W_2}\in H_2\subset\SL_{W_2}$. 
Then, on both sides we have involutions 
\[ \sigma_1\colon \Sigma_1\to \Sigma_1\qquad\text{and}\qquad \sigma_2\colon \Sigma_2\to \Sigma_2\]
and, via Berglund--Hübsch, mirror partners 
$\Sigma^\vee_1=\Sigma_{W^\vee_1,H^\vee_1}$ and $\Sigma^\vee_2=\Sigma_{W^\vee_2,H^\vee_2}$, with involutions 
\[ \sigma^\vee_1\colon \Sigma^\vee_1\to \Sigma^\vee_1\qquad\text{and}\qquad \sigma^\vee_2\colon \Sigma^\vee_2\to \Sigma^\vee_2.\]
Theorem \ref{thm:semimirror} applies and we now show how it leads us to 
the cohomological mirror duality 
\begin{equation}\label{eq:CRBVmirror}
	H_{\CR}^{p,q}\left(\Sigma;\CC\right)
\cong H_{\CR}^{d-p,q} 
(\Sigma^\vee;\CC).
\end{equation}
between the $d$-dimensional orbifolds
$\Sigma=[\Sigma_1\times \Sigma_2/(\sigma_1,\sigma_2)]$ \text{and} 
$\Sigma^\vee=[\Sigma^\vee_1\times \Sigma^\vee_2/(\sigma^\vee_1,\sigma^\vee_2)].$
If crepant resolutions $\tilde\Sigma$ and $\tilde \Sigma^\vee$ 
exist, Theorem \ref{thm:crthm}
leads to 
$H^{p,q}(\tilde \Sigma;\CC)\cong H^{d-p,q}(\tilde \Sigma^\vee;\CC).$

We prove the above theorem in a more general form admitting 
any number $n$ of factors. The involution $(\sigma_1,\sigma_2)$ is 
replaced by a class of subgroups $G$ of $(\ZZ/2)^{n}=\prod_i(\sigma_i)$ 
specialising for $n=2$ to the case of the 
order-$2$ subgroup spanned by $(\sigma_1,\sigma_2)<
(\ZZ/2)\times(\ZZ/2)$. (For each even $n$ the construction includes 
the 
order-$2$ subgroup spanned by $(\sigma_1,\dots,\sigma_{2n})<
(\ZZ/2)^{2n}$ which we refer to in the introduction. 

Each symmetry of $(\ZZ/2)^{n}$ is of the following 
form for $I\in [n]$
\[\sigma_I=(\sigma_1^{a_1},\dots,\sigma_n^{a_n})\qquad 
\text{with $a_i=0$ if and only if $i \not \in I$}.\]
Then $G$ is called \emph{admissible} if any two elements $\sigma_{I}$, $\sigma_J$ satisfy the condition
$|I\setminus J|\in 2\NN$. 
Note that, since $G$ is a group, we have $\sigma_\varnothing \in G$ and therefore 
$|I|\in 2\ZZ$ for all $\sigma_I$; if we regard the elements of 
$(\ZZ/2)^n$ as an $n$-dimensional representation in $\GL(n;\CC)$, 
this means in particular that $G$ lies in $\SL(n;\CC)$. 
Furthermore, $\sigma_I\sigma_J=\sigma_{I\Delta J}$ for 
$I\Delta J=I\setminus J \sqcup J\setminus I$. Therefore 
the condition 
$|I\setminus J|\in 2\NN$ is symmetric: it is equivalent to 
$|J\setminus I|\in 2\NN$ because $|I\Delta J|$ is even. 

%
%
%
\begin{thm}\label{thm:mirror} 
For $i=1,\dots n$, let 
$(W_i,H_i)$ be a pair of a Calabi--Yau invertible polynomial of the 
form $W_i=(x_0^i)^2+f_i(x_1^i,\dots, x_{m_i}^i)$ with $J_W\in H_i\subseteq \SL_{W_i}$. 
Let $G$ be an admissible subgroup of $(\ZZ/2)^n$.
For $m=\sum_i m_i$, 
set the $(m-n)$-dimensional Calabi--Yau orbifolds
\[
\Sigma=
\left[\prod_i \Sigma_{W_i,H_i}/G\right]\qquad
\text{and}
\qquad
\Sigma^\vee=
\left[\prod_i \Sigma_{W^\vee_i,H^\vee_i}/G\right]\]
Then, we have 
\[
	H_{\CR}^{p,q}\left(\Sigma;\CC\right)
\cong H_{\CR}^{m-n-p,q} 
(\Sigma^\vee;\CC).
\]
 \end{thm}
By Theorem \ref{thm:crthm}, as an immediate consequence, we have the following statement.
\begin{cor}\label{cor:BVres}
If $\Sigma$ and $\Sigma^\vee$ admit a crepant resolution $\wt \Sigma$ and $\wt \Sigma^\vee$, then $H^{p,q}(\wt\Sigma;\CC)\cong 
H^{n+m-2-p,q}(\wt \Sigma^\vee;\CC)$.
\end{cor}

\noindent \textit{Proof of Theorem \ref{thm:mirror}.}
The stack $\Sigma$ is the quotient stack 
$[U/H]$, where 
$U$ is the locus within $\CC^{m+n}-Z$ where the polynomials $W_i$ vanish. Here 
\[ Z=\{(z^1_0,\dots,z^1_{m_1},\dots,z^n_{0},\dots, z^n_{m_n}) \mid z^i_0 = \cdots =z^i_{m_i}=0 \text{ for some } i\}. \] 
To define $H$, embed $G$ as a subgroup of $\Aut(W):=\Aut(W_1) \times \cdots \times \Aut(W_n)$, and consider the map $\phi: (\GG_m)^n \to \Aut(W)$ defined by 
\[(\lambda_1,\dots,\lambda_n) \mapsto (\lambda_1^{w^1_0},\dots,\lambda_1^{w^1_{m_1}},\dots,\lambda_n^{w^n_0},\dots,\lambda_n^{w^n_{m_n}})\]
where the $w^i_j$ are the weights of the $x^i_j$. Then $H$ is the the group generated by $G$, $H_1 \times \cdots \times H_n$, and $\phi((\GG_m)^n).$ 

As discussed in Section \ref{sect:thestackweconsider}, we can use the quotient stack presentation of the inertia stack to determine Chen--Ruan's orbifold cohomology. The cohomology is a direct sum over each 
element in $H$ with non-empty fixed locus in $U$. We claim that there are only finitely many such elements. It suffices to show that there are only finitely many elements of $(\GG_m)^n$ such that $\phi((\lambda_i)_{i=1}^n) g$ has a non-empty fixed locus for each $g$ in the subgroup generated by $H_1 \times \dots \times H_n$ and $G$. Writing $g$ as $(g^1_0,\dots,g^n_{m_n})$, we see that in order for $\phi((\lambda_i)_{i=1}^n) g$ to have a non-empty fixed locus, for every $i$ there must be at least one $j$ such that $\lambda_i^{w_{ij}} g^i_j=1$. This is because $U$ has empty intersection with $Z$ by definition. As only finitely many $\lambda_i$ can satisfy this for each~$g$, this shows the claim.

For every $\ga \in H$ with non-empty fixed locus, we write 
$(\al_1,\dots,\al_n)$, separating the coordinates from each polynomial. Then the contribution to Chen--Ruan's cohomology 
in bidegree $(p,q)$ is a cohomology group 
\begin{equation*}
H^{h,k}\left(\left(\left(Z(W_1)_{\al_1}\times \cdots \times Z(W_n)_{\al_n}\right)/(H_1\times \cdots \times H_n)\right)/G \right),
\end{equation*}
where $(h,k)\in \ZZ\times \ZZ$ satisfies 
\[
	(h,k)+(\mathrm{age}(\ga),\mathrm{age}(\ga))=(p,q).
\]
Here $Z(W_i)_{\al_i}$ is quotient of the vanishing of $(W_i)_{\al_i}$ in $(\CC^{m_i+1})^{\alpha_i}$ by $\GG_m$. Notice that 
$\left(Z(W_1)_{\al_1}\times \cdots \times Z(W_n)_{\al_n}\right)/(H_1\times \cdots \times H_n)$
equals the product of $n$
projective varieties with finite group quotient singularities 
\[
X_1 \times \cdots \times X_n;
\]
so, 
the $(h,k)$-graded cohomology decomposes as 
\[
	\bigoplus_{\substack{\sum_{i=1}^n h_i=h\\ \sum_{i=1}^n k_i=k}}\left(\bigotimes_{i=1}^n H^{h_i,k_i}(X_i;\CC) \right)^{G}.
\]
Suppose $\ga$ was in the coset $g (H_1 \times \cdots \times H_n) \phi((\GG_m)^n),$ where $g=\sigma_I$ for $I \subset \{1,\dots,n\}$. Then each choice of $h_i,k_i$ gives
\begin{equation} \label{Ginv} \bigotimes_{i \in I} H^{h_i,k_i}_\sigma (\Sigma_{W_i,H_i};\CC)^+ \otimes \left(\bigotimes_{i \in \overline{I}}H^{h_i,k_i}(\Sigma_{W_i,H_i};\CC)\right)^G
\end{equation}
with $(\ )^+$ and $(\ )^-$ denoting the involution-invariant and 
involution-anti-invariant subspaces, and $\overline{I}$ the complement of $I$. This can be further decomposed to a sum over $J \subset \overline{I}$, where the contribution from a given $J$ is
\[\bigotimes_{i \in I} H^{h_i,k_i}_\sigma (\Sigma_{W_i,H_i};\CC)^+ \otimes \left(\bigotimes_{i \in J} H^{h_i,k_i}(\Sigma_{W_i,H_i};\CC)^- \otimes \bigotimes_{i \in \overline{I} \setminus J} H^{h_i,k_i}(\Sigma_{W_i,H_i};\CC)^+\right)^G.\]
This is non-empty only if $J$ satisfies $|J \cap I'|\in 2\ZZ$ for all $I'$ such that $\sigma_{I'} \in G$. The contribution from such a $J$ is
\[\bigotimes_{i \in I} H^{h_i,k_i}_\sigma (\Sigma_{W_i,H_i};\CC)^+ \otimes \bigotimes_{i \in J} H^{h_i,k_i}(\Sigma_{W_i,H_i};\CC)^- \otimes \bigotimes_{i \in \overline{I} \setminus J} H^{h_i,k_i}(\Sigma_{W_i,H_i};\CC)^+.\]
Because $G$ is admissible, $|J \cap I'|\in 2\ZZ$ if and only if $|(\overline{I} \setminus J) \cap I'|\in 2\ZZ$, as $|\overline{I} \cap I'| \in 2\ZZ.$ Therefore, we could alternatively write equation \ref{Ginv} as a sum over $J$ satisfying the same conditions, but contributing
\[\bigotimes_{i \in I} H^{h_i,k_i}_\sigma (\Sigma_{W_i,H_i};\CC)^+ \otimes \bigotimes_{i \in J} H^{h_i,k_i}(\Sigma_{W_i,H_i};\CC)^+ \otimes \bigotimes_{i \in \overline{I} \setminus J} H^{h_i,k_i}(\Sigma_{W_i,H_i};\CC)^-.\]
Theorem \ref{thm:semimirror} says that this space is isomorphic to
\[\bigotimes_{i \in I} H^{m_i-h_i-1,k_i}_\sigma (\Sigma_{W^\vee_i,H^\vee_i};\CC)^+ \otimes \bigotimes_{i \in J} H^{m_i-h_i-1,k_i}(\Sigma_{W^\vee_i,H^\vee_i};\CC)^- \otimes \bigotimes_{i \in I \setminus J} H^{m_i-h_i-1,k_i}(\Sigma_{W^\vee_i,H^\vee_i};\CC)^+.\]
Applying the same argument to the mirror, we obtain that \eqref{Ginv} is isomorphic to
\[ \bigotimes_{i \in I} H^{m_i-h_i-1,k_i}_\sigma (\Sigma_{W_i^\vee,H_i^\vee};\CC)^+ \otimes \left(\bigotimes_{i \in \overline{I}}H^{m_i-h_i-1,k_i}(\Sigma_{W_i^\vee,H_i^\vee};\CC)\right)^G.\]
Summing over all choices of $h_i,k_i$ and $\gamma$ proves the theorem. 
\qed

\begin{rem} Part of the above proof is just a check of 
Künneth formula for Chen--Ruan cohomology, which can be found in 
\cite{Hepworth} in a more general setup. 
We provide an explicit treatment because the present situation 
requires a slightly 
more detailed analysis of 
invariant and anti-invariant cohomology. \end{rem}

\bibliographystyle{amsplain}
\bibliography{references}

\end{document}